\nonstopmode \numberwithin{equation}{section}
\nonstopmode \numberwithin{equation}{section}
\theoremstyle{plain}
\newtheorem{conj}{Conjecture}
\theoremstyle{definition}
\newtheorem{defn}{Definition}[section]
\newtheorem{thm}{Theorem}[section]
\newtheorem{prob}{Problem}[section]
\newtheorem{cor}{Corollary}[section]
\newtheorem{ques}{Question}[section]
\newtheorem{prop}{Proposition}[section]
\newtheorem{rem}{Remark}[section]
\newtheorem{lem}{Lemma}[section]
\newcounter{minutes}\setcounter{minutes}{\time}
\newcounter{hours}\setcounter{hours}{\time}
\newcounter {own}
\def\theown {\thesection       .\arabic{own}}
\newenvironment{pf}[1][]{%
 \vskip 3mm
 \noindent
 \ifthenelse{\equal{#1}{}}%
  {{\slshape Proof. }}%
  {{\slshape #1.} }%
 }%
{\qed\bigskip}
\newcounter{alphabet}
\def\be{\begin{equation}}
\def\ee{\end{equation}}
\newcommand{\bee}{\begin{enumerate}}
\newcommand{\eee}{\end{enumerate}}
\newcommand{\blem}{\begin{lem}}
\newcommand{\elem}{\end{lem}}
\newcommand{\bthm}{\begin{thm}}
\newcommand{\ethm}{\end{thm}}
\newcommand{\bcor}{\begin{cor}}
\newcommand{\ecor}{\end{cor}}
\newcommand{\beg}{\begin{examp}}
\newcommand{\eeg}{\end{examp}}
\newcommand{\begs}{\begin{examples}}
\newcommand{\eegs}{\end{examples}}
\newcommand{\bdefn}{\begin{defn}}
\newcommand{\edefn}{\end{defn}}
\newcommand{\bprob}{\begin{prob}}
\newcommand{\eprob}{\end{prob}}
\newcommand{\bei}{\begin{itemize}}
\newcommand{\eei}{\end{itemize}}
\newcommand{\bcon}{\begin{conj}}
\newcommand{\econ}{\end{conj}}
\newcommand{\bcons}{\begin{conjs}}
\newcommand{\econs}{\end{conjs}}
\newcommand{\bprop}{\begin{prop}}
\newcommand{\eprop}{\end{prop}}
\newcommand{\br}{\begin{rem}}
\newcommand{\er}{\end{rem}}
\newcommand{\brs}{\begin{rems}}
\newcommand{\ers}{\end{rems}}
\newcommand{\bo}{\begin{obser}}
\newcommand{\eo}{\end{obser}}
\newcommand{\bos}{\begin{obsers}}
\newcommand{\eos}{\end{obsers}}
\newcommand{\bpf}{\begin{pf}}
\newcommand{\epf}{\end{pf}}
\newcommand{\ba}{\begin{array}}
\newcommand{\ea}{\end{array}}
\newcommand{\beq}{\begin{eqnarray}}
\newcommand{\beqq}{\begin{eqnarray*}}
\newcommand{\eeq}{\end{eqnarray}}
\newcommand{\eeqq}{\end{eqnarray*}}
\begin{document}

\title{Generalized weighted composition operators  on the hardy space $H^2(\mathbb{D}^n)$}

\author{Molla Basir Ahamed}
\address{Molla Basir Ahamed, Department of Mathematics, Jadavpur University, Kolkata-700032, West Bengal, India.}
\email{mbahamed.math@jadavpuruniversity.in}

\author{Vasudevarao Allu}
\address{Vasudevarao Allu,
	School of Basic Science,
	Indian Institute of Technology Bhubaneswar,
	Bhubaneswar-752050, Odisha, India.}
\email{avrao@iitbbs.ac.in}

\author{Taimur Rahman}
\address{Taimur Rahman, Department of Mathematics, Jadavpur University, Kolkata-700032, West Bengal, India.}
\email{taimurr.math.rs@jadavpuruniversity.in}

\subjclass[{AMS} Subject Classification:]{Primary 47B33, 47B02 Secondary 47A05, 47B15}
\keywords{Composition Operator, Differentiation Operator, Kernel, Complex symmetry, Hardy Space}

\def\thefootnote{}
\footnotetext{ {\tiny File:~\jobname.tex,
printed: \number\year-\number\month-\number\day,
          \thehours.\ifnum\theminutes<10{0}\fi\theminutes }
} \makeatletter\def\thefootnote{\@arabic\c@footnote}\makeatother

\begin{abstract} 
In this paper, we explore the complex symmetrical characteristics of weighted composition operators $W_{u, v}$ and weighted composition-differentiation operators $W_{u, v, k_1, k_2, \ldots, k_n}$ on the Hardy space $H^2(\mathbb{D}^n)$ over the Polydisk $\mathbb{D}^n$, with respect to the standard conjugation $\mathcal{J}$. We specify explicit conditions that confirm the Hermitian characteristics of the operator $W_{u, v, k_1, k_2, \ldots, k_n}$ and describe the conditions necessary for it to exhibit normal behavior. Additionally, we identify the kernels of the generalized weighted composition-differentiation operators and their corresponding adjoint operators.

\end{abstract}
\maketitle
\pagestyle{myheadings}
\markboth{M. B. Ahamed, V. Allu and T. Rahman}{Generalized weighted composition operators on Hardy space  $H^2(\mathbb{D}^n)$}
%\tableofcontents
\section{Introduction}
Let $\mathbb{T}$ denote the boundary of the unit disk $\mathbb{D}:=\{z\in\mathbb{C}:|z|<1\}$ in the complex plane $\mathbb{C}$. The polydisk $\mathbb{D}^n$ and the torus $\mathbb{T}^n$ can be represented as the Cartesian products of $\mathbb{D}$  and $\mathbb{T}$, respectively. Let $L^2:=L^2(\mathbb{T}^n, \sigma)$  denote the standard Lebesgue space over $\mathbb{T}^n$, where $\sigma$ stands for the normalized Haar measure on the torus $\mathbb{T}^n$. It has been established that the set $\{e_{m_1, m_2, \ldots, m_n}(z_1,z_2, \ldots, z_n)=\prod_{i=1}^{n}z_{i}^{m_i}: m_1, m_2, \ldots, m_n\in\mathbb{Z}\}$ forms an orthonormal basis for $L^2$. If a function $f$ belongs to $L^2$, it can be expressed by using its Fourier coefficients $ \Hat{f}(m_1, m_2, \ldots, m_n) $ as
\begin{align*}
 f(z_1, z_2, \ldots, z_n)=\sum_{m_1, m_2, \ldots, m_n=-\infty}^{\infty}\Hat{f}(m_1, m_2, \ldots, m_n)\prod_{i=1}^{n}z^{m_i}_i.	
\end{align*}
Furthermore, we define $L^\infty = L^\infty(\mathbb{T}^n)$ as the Banach space containing all essentially bounded functions defined on $\mathbb{T}^n$.\vspace{1.2mm}
 
 The Hilbert space denoted as $H^2=H^2(\mathbb{D}^n)$, referred to as the Hardy space, consists of all analytic functions $f$ defined on the polydisk $\mathbb{D}^n$, and satisfies the condition
 \begin{align*}
 	 ||f||^2 := \sup_{0 < r < 1} \int_{\mathbb{T}^n} |f(r\zeta)|^2 \, d\sigma(\zeta) < \infty.
 \end{align*}
It is a well-known fact that for a function $f$ belonging to the Hardy space $H^2(\mathbb{D}^n)$, there exists a boundary function $\lim_{r\to 1}f(r\xi)$, which is an element of $L^2$, for almost every $\xi\in\mathbb{T}^n$. The Poisson extension of this boundary function coincides with $f$ on the polydisk $\mathbb{D}^n$. By identifying a function in $H^2(\mathbb{D}^n)$ with its boundary function, we can consider $H^2(\mathbb{D}^n)$ as a subset of $L^2$. Hence, the norm of $f\in H^2$ can be denoted as 
\begin{align*}
		||f||^2=\int_{\mathbb{T}^n}|f(\zeta)|^2 d\sigma(\zeta).
\end{align*}
The reproducing kernel function for the Hardy space $H^2$ is defined as
\begin{align*}
	K_{\alpha}(z)=K_{\alpha_1, \alpha_2, \ldots, \alpha_n}(z_1, z_2, \ldots, z_n)=\prod_{i=1}^{n}K_{\alpha_i}(z_i)=\prod_{i=1}^{n}\frac{1}{(1-\bar{\alpha_i}z_i)},
\end{align*}
where $z=(z_1, z_2, \ldots, z_n), \alpha=(\alpha_1, \alpha_2, \ldots,\alpha_n)\in\mathbb{D}^n$, and $K_{\alpha_i}(z_i)$'s are the reproducing kernel functions for $H^2(\mathbb{D})$. The normalized reproducing kernel for $H^2(\mathbb{D}^n)$ is denoted by $k_{\alpha}(z) =\prod_{i=1}^{n}e_{\alpha_i}(z_i)$, with $e_{\alpha_1}(z_1), e_{\alpha_2}(z_2), \ldots, e_{\alpha_n}(z_n)$ being the normalized reproducing kernel functions for $H^2(\mathbb{D})$.\vspace{2mm}
 
Let $ \mathcal{B}(\mathcal{H}) $ be the algebra of all bounded linear operators on a separable complex Hilbert space $ \mathcal{H} $. A conjugation on $ \mathcal{H} $ is an antilinear operator $ \mathcal{C} : \mathcal{H} \to \mathcal{H}$ which satisfies $ \langle \mathcal{C}x, \mathcal{C}y \rangle_{\mathcal{H}}=\langle y, x \rangle_{\mathcal{H}} $ for all $ x, y\in\mathcal{H} $ and $ \mathcal{C}^2=I_{\mathcal{H}} $, where $ I_{\mathcal{H}} $ is the identity operator on the Hilbert space $ {\mathcal{H}} $. An operator $ T\in\mathcal{B}({\mathcal{H}}) $ is said to be complex symmetric if there exists a conjugation $ \mathcal{C} $ on $ \mathcal{H} $ such that $ T=\mathcal{C}T^*\mathcal{C} $. In this case, we say that $ T $ is complex symmetric with conjugation $ \mathcal{C} $. The comprehensive examination of complex symmetric operators was initiated by Garcia, Putinar, and Wogen \cite{Garcia-Putinar-TAMS-2006,Garcia-Putinar-TAMS-2007,Garcia-Putinar-TAMS-2008,Garcia-Wogen-JFA-2009,Garcia-Wogen-TAMS-2010}. The class of complex symmetric operators includes normal operators, binormal operators, operators with algebraic degree two, Hankel operators, truncated Toeplitz operators, and the Volterra integration operator. In the recent times, there has been a considerable amount of research devoted to investigate the complex symmetric composition operators that operate on classical Hilbert space of analytic functions (see \cite{Bourdon-Noor-JMAA-2015,Garcia-Hammond-OPAP-2013,Jun-Kim-Ko-Lee-JFA-2014,Lim-Khoi-JMAA-2018,Noor-Severiano-PAMS-2020,Yao-JMAA-2017} and references therein).\vspace{2mm}

In this context, $W_{u, v}$ refers to the weighted composition operator, while $W_{u, v, k_1, k_2, \ldots, k_n}$ represents the weighted composition-differentiation operator on $H^2(\mathbb{D}^n$). The problem of determining when such an operator acts as a conjugation on the weighted Hardy space of the unit disk was fully resolved in \cite{Lim-Khoi-JMAA-2018}. Moreover, this finding was extended to the unit ball in \cite{Han-Wang-JMAA-2019, Hu-Yang-Zhou-IJM-2020, Wang-Yao-IJM-2016}. Our paper will specifically concentrate on the unit polydisk scenario. However, as we move to higher dimensions, we confront not just combinatorial complexities but also unexpected barriers and novel phenomena. For instance, studying the boundedness of the operator $W_{u, v}$ in $H^2(\mathbb{D}^n)$ becomes more complicated and challenging.\vspace{1.2mm} 

In general, achieving a complete characterization of all complex symmetric composition operators is difficult, encouraging researchers to investigate complex symmetric weighted composition operators with specific conjugation traits. In this paper, we delve into the analysis of complex symmetric weighted composition operators using the standard conjugation $\mathcal{J}$.\vspace{1.2mm}

The organization of this paper is as follows. In section 2, the operators $W_{u, v}$ and $W_{u, v, k_1, k_2, \ldots, k_n}$ are shown to possess complex symmetry on $H^2(\mathbb{D}^n)$ as we explicitly determine the corresponding forms of $u$ and $v_j$ (where $j=1, 2, \ldots, n$), taking into account the standard conjugation $\mathcal{J}$. In Section 3, we establish conditions that are both necessary and sufficient for $W_{u, v, k_1, k_2, \ldots, k_n}$ to exhibit Hermitian characteristics. Section 4 deals with a sufficient condition for the normality of $W_{u, v, k_1, k_2, \ldots, k_n}$. In section 5, we investigate the kernels of both $W_{u, v, k_1, k_2, \ldots, k_n}$ and its adjoint operator $W^*_{u, v, k_1, k_2, \ldots, k_n}$.
\section{Complex symmetric weighted composition-differentiation operator on Hardy space $ H^2(\mathbb{D}^n) $}
Let $u :\mathbb{D}^n\to \mathbb{C}$ be an analytic function and $v :\mathbb{D}^n\to\mathbb{D}^n$ be an analytic self-map of the polydisk $\mathbb{D}^n$ given by $v(z_1, z_2, \ldots, z_n) = (v_1(z_1), v_2(z_2), \ldots, v_n(z_n))$, where $v_1, v_2, \ldots, v_n$ are analytic self-maps of the unit disk $\mathbb{D}$. We now define the weighted composition operator $W_{u, v}$ on the Hardy space $H^2(\mathbb{D}^n)$ as follows:
\begin{align*}
	W_{u, v}f(z)=u(z)f(v(z)),\; f\in H^2(\mathbb{D}^n),
\end{align*}
that is,
\begin{align*}
W_{u, v}f(z_1, z_2, \ldots, z_n)=u(z_1, z_2, \ldots, z_n)f(v_1(z_1), v_2(z_2), \ldots, v_n(z_n)),\; f\in H^2(\mathbb{D}^n).
\end{align*}
The main aim of this paper is to explore different characteristics of weighted composition-differentiation operators in the Hardy space $H^2(\mathbb{D}^n)$, exceeding current research outcomes. Going forward, we shall refer to the composition-differentiation operator as $W_{u, v, k_1, k_2, \ldots, k_n}$, defined as
\begin{align*}
W_{u, v, k_1, k_2, \ldots, k_n}f(z)=u(z)\frac{\partial^{\sum_{i=1}^{n}k_i}f}{\partial z^{k_1}_1\partial z^{k_2}_2 \ldots \partial z^{k_n}_n}(v(z))
\end{align*}
where $f$ is an element of the Hardy space $H^2(\mathbb{D}^n)$. It is clear and straightforward that $W_{u, v, 0, 0, \ldots, 0}=W_{u, v}$. \vspace{1.2mm}

Assuming $k_1, k_2, \ldots, k_n$ are non-negative integers and $w\in\mathbb{D}^n$, we define $K_w^{[k_1, k_2, \ldots, k_n]}(z)$ as the reproducing kernel for evaluating the $\sum_{i=1}^{n}k_i$-th partial derivative at a specific point $w$, where
\begin{align*}
	K_w^{[k_1, k_2, \ldots, k_n]}(z)=\prod_{i=1}^{n}\frac{k_i!z^{k_i}i}{(1-\bar{w_i}z_i)^{k_i+1}}, z\in\mathbb{D}^n.
\end{align*}
It can be easily verified that $\bigg\langle f(z), K_w^{[k_1, k_2, \ldots, k_n]}(z)\bigg\rangle=\frac{\partial^{\sum_{i=1}^{n}k_i}f}{\partial z^{k_1}_1\partial z^{k_2}_2 \ldots \partial z^{k_n}_n}(w)$.\vspace{1.2mm}

Here the weighted composition conjugation on $H^2(\mathbb{D}^n)$ is of the form 
\begin{align*}
	A_{u, v}f(z)=u(z)\overline{f(\overline{v(z)})}, f\in\ H^2(\mathbb{D}^n)
\end{align*}
that is,
\begin{align*}
		A_{u, v}f(z_1, z_2, \ldots, z_n)=u(z_1, z_2, \ldots, z_n)\overline{f(\overline{v_1(z_1)}, \overline{v_2(z_2)}, \ldots, \overline{v_n(z_n)})}, f\in\ H^2(\mathbb{D}^n).
\end{align*}
If the function $u$ is identically equal to $1$, and $v$ is the identity map of the polydisk $\mathbb{D}^n$, the weighted composition conjugation $A_{u, v}$ simplifies to the standard conjugation $\mathcal{J}$, defined as $\mathcal{J}f(z):=\overline {f(\bar{z})}$. One can easily see that $A_{u, v}=W_{u, v}\mathcal{J}$. For a more comprehensive understanding of weighted composition conjugations, we refer to \cite{Lim-Khoi-JMAA-2018} and references therein.\\\vspace{1.2mm} 
In the following, we provide two lemmas that are focused on the action of the adjoint of the composition operator $W_{u, v}$ and  composition-differentiation operator $W_{u, v, k_1, k_2, \ldots, k_n}$ on the reproducing kernel of $H^2(\mathbb{D}^n)$. In the entirety of the article, we will use the notation $m=\sum_{i=1}^{n}{k_i}$.

\begin{lem}\label{lem-2.1}
Let $u:\mathbb{D}^n\to \mathbb{C} $ be a nonzero analytic function and, let $v:\mathbb{D}^n\to\mathbb{D}^n $ with $v(z_1, z_2, \ldots, z_n)=(v_1(z_1), v_2(z_2),\ldots, v_n(z_n))$ be an analytic self-map of $\mathbb{D}^n$ such that $W_{u, v}$ is bounded on $H^2(\mathbb{D}^n)$. Then for any $ w\in\mathbb{D}^n$, 
	\begin{align*}
		W^*_{u, v,}K_w(z)=\overline{u(w)}K_{v(w)}(z).
	\end{align*}
\end{lem}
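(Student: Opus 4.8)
The plan is to verify the identity weakly, by pairing both sides against an arbitrary $f\in H^2(\mathbb{D}^n)$ and invoking the reproducing property of the kernel. Since $W_{u,v}$ is assumed bounded on $H^2(\mathbb{D}^n)$, its adjoint $W^*_{u,v}$ is a well-defined bounded operator, and hence it suffices to show that $\langle f, W^*_{u,v}K_w\rangle = \langle f, \overline{u(w)}K_{v(w)}\rangle$ for every $f\in H^2(\mathbb{D}^n)$; two vectors of a Hilbert space that have the same inner product with all test vectors must coincide.

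First I would move the adjoint across the inner product and write $\langle f, W^*_{u,v}K_w\rangle = \langle W_{u,v}f, K_w\rangle$. Using the definition $W_{u,v}f(z)=u(z)f(v(z))$ together with the reproducing identity $\langle g, K_w\rangle = g(w)$, this equals $(W_{u,v}f)(w) = u(w)\,f(v(w))$; here it is essential that $v$ is a self-map of the polydisk, so that $v(w)\in\mathbb{D}^n$ and the point evaluation $f(v(w))$ is legitimate and $K_{v(w)}$ is a genuine element of $H^2(\mathbb{D}^n)$. Next I would compute the other pairing: by antilinearity of the inner product in its second argument, $\langle f, \overline{u(w)}K_{v(w)}\rangle = u(w)\,\langle f, K_{v(w)}\rangle = u(w)\,f(v(w))$, again by the reproducing property, this time applied at the point $v(w)$. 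Since both expressions reduce to $u(w)f(v(w))$ for every $f$, the desired equality $W^*_{u,v}K_w = \overline{u(w)}K_{v(w)}$ follows.

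The computation carries no genuine obstacle; the only matters requiring care are conventions. One must fix the sesquilinearity convention consistently with the reproducing identity $\langle g, K_w\rangle = g(w)$ (linear in the first slot, antilinear in the second), so that the complex conjugate in $\overline{u(w)}$ emerges in the correct place, and one must remember that it is precisely the boundedness of $W_{u,v}$ that licenses the passage to the adjoint $W^*_{u,v}$. Everything else is a direct and routine application of the reproducing kernel property of $H^2(\mathbb{D}^n)$.
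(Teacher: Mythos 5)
Your proof is correct and follows essentially the same route as the paper's: pass the adjoint across the inner product, apply the reproducing property at $w$ and at $v(w)$, and conclude by density of test vectors that the two elements coincide. If anything, your version is slightly more careful than the paper's, which contains a small typographical slip (writing $\overline{u(w)}K_w$ where $\overline{u(w)}K_{v(w)}$ is meant) in the intermediate step.
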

\begin{proof}[\bf Proof of Lemma \ref{lem-2.1}]
	Let $ f\in H^2(\mathbb{D}^n)$. Then a simple computation shows that 
	\begin{align*}
		\langle f, W^*_{u, v}K_w\rangle=&\langle W_{u, v}f, K_w\rangle\\=&\langle uf(v), K_w\rangle\\=&u(w)f(v(w))\\=&\langle f, \overline{u(w)}K_w\rangle.
	\end{align*} 
	Therefore, we have 
	\begin{align*}
		\langle f, W^*_{u, v}K_w\rangle=\langle f, \overline{u(w)}K_w\rangle\;\mbox{for all}\; f\in\ H^2(\mathbb{D}^n)	
	\end{align*}
	which leads the following conclusion
	\begin{align*}
	W^*_{u, v,}K_w(z)=\overline{u(w)}K_{v(w)}(z).	
	\end{align*}
	This completes the proof.
\end{proof}
\begin{lem}\label{lem-2.2}
Let $u:\mathbb{D}^n\to \mathbb{C} $ be a nonzero analytic function and, let $v:\mathbb{D}^n\to\mathbb{D}^n $ with $v(z_1, z_2, \ldots, z_n)=(v_1(z_1), v_2(z_2),\ldots, v_n(z_n))$ be an analytic self-map of $\mathbb{D}^n$ such that $W_{u, v, k_1, k_2, \ldots, k_n}$ is bounded on $H^2(\mathbb{D}^n)$. Then for any $ w\in\mathbb{D}^n$,
\begin{align*}
	W^*_{u, v, k_1, k_2, \ldots, k_n}K_w(z)=\overline{u(w)}K^{[k_1, k_2, \ldots, k_n]}_{v(w)}(z)
\end{align*} 	
\end{lem}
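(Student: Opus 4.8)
The plan is to imitate the proof of Lemma \ref{lem-2.1} verbatim, simply replacing the point-evaluation functional there by the derivative-evaluation functional. Fix $w\in\mathbb{D}^n$ and take an arbitrary $f\in H^2(\mathbb{D}^n)$. First I would move the operator across the defining pairing of the adjoint, writing $\langle f, W^*_{u,v,k_1,k_2,\ldots,k_n}K_w\rangle = \langle W_{u,v,k_1,k_2,\ldots,k_n}f, K_w\rangle$. The boundedness hypothesis is exactly what guarantees that $W^*_{u,v,k_1,k_2,\ldots,k_n}$ exists and that $W_{u,v,k_1,k_2,\ldots,k_n}f$ again lies in $H^2(\mathbb{D}^n)$, so that the reproducing property of $K_w$ may be applied to it.

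Next I would use that reproducing property to evaluate $\langle W_{u,v,k_1,k_2,\ldots,k_n}f, K_w\rangle = \big(W_{u,v,k_1,k_2,\ldots,k_n}f\big)(w)$, and then unwind the definition of the operator to rewrite this as $u(w)\,\frac{\partial^{m}f}{\partial z_1^{k_1}\partial z_2^{k_2}\cdots\partial z_n^{k_n}}(v(w))$, where $m=\sum_{i=1}^n k_i$. This is the only place where the specific form of $W_{u,v,k_1,k_2,\ldots,k_n}$ enters.

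The crucial step is to reinterpret the mixed partial derivative appearing here as an inner product. For this I would invoke the identity $\langle f, K_w^{[k_1,k_2,\ldots,k_n]}\rangle = \frac{\partial^{m}f}{\partial z_1^{k_1}\partial z_2^{k_2}\cdots\partial z_n^{k_n}}(w)$ recorded just before the statement, now applied at the point $v(w)$, so that $\frac{\partial^{m}f}{\partial z_1^{k_1}\partial z_2^{k_2}\cdots\partial z_n^{k_n}}(v(w)) = \langle f, K_{v(w)}^{[k_1,k_2,\ldots,k_n]}\rangle$. Substituting and pulling the scalar $u(w)$ into the conjugate-linear slot gives $\langle f, \overline{u(w)}\,K_{v(w)}^{[k_1,k_2,\ldots,k_n]}\rangle$. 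Since $f\in H^2(\mathbb{D}^n)$ was arbitrary, the two right-hand entries must agree, which yields the asserted formula $W^*_{u,v,k_1,k_2,\ldots,k_n}K_w(z) = \overline{u(w)}\,K_{v(w)}^{[k_1,k_2,\ldots,k_n]}(z)$.

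I do not expect a genuine obstacle, as the argument is structurally identical to Lemma \ref{lem-2.1}; the only point deserving care is the derivative-reproducing identity itself, which the excerpt states as ``easily verified.'' If full rigor were wanted, I would derive it from the tensor decomposition $H^2(\mathbb{D}^n)\cong H^2(\mathbb{D})^{\otimes n}$ together with the one-variable fact that $k!\,z^k/(1-\bar w z)^{k+1}$ represents the functional $g\mapsto g^{(k)}(w)$ on $H^2(\mathbb{D})$, checking it coordinate by coordinate and multiplying. Everything else is a direct transcription of the preceding lemma.
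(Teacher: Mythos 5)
Your proposal is correct and follows essentially the same route as the paper's own proof: pass to the adjoint pairing, apply the reproducing property of $K_w$ to $W_{u,v,k_1,\ldots,k_n}f$, rewrite the resulting mixed partial derivative at $v(w)$ via the derivative-reproducing kernel $K_{v(w)}^{[k_1,\ldots,k_n]}$, and conclude by the arbitrariness of $f$. Your extra remark on justifying the derivative-reproducing identity through the tensor decomposition $H^2(\mathbb{D}^n)\cong H^2(\mathbb{D})^{\otimes n}$ is a welcome addition, since the paper only asserts that identity without proof.
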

\begin{proof}[\bf Proof of Lemma \ref{lem-2.2}]
		Let $ f\in H^2(\mathbb{D}^n)$. Then a simple computation shows that 
	\begin{align*}
		\langle f, W^*_{u, v, k_1, k_2, \ldots, k_n}K_w\rangle=&\langle W_{u, v, k_1, k_2, \ldots, k_n}f, K_w\rangle\\=&\left\langle u\frac{\partial^mf}{\partial z^{k_1}_1\partial z^{k_2}_2 \ldots \partial z^{k_n}_n}(v), K_w\right \rangle\\=&u(w)\frac{\partial^mf}{\partial z^{k_1}_1\partial z^{k_2}_2 \ldots \partial z^{k_n}_n}(v(w))\\=&\left\langle f, \overline{u(w)}K^{[k_1, k_2, \ldots, k_n]}_{v(w)}\right\rangle.
	\end{align*} 
	Therefore, we have 
	\begin{align*}
		\langle f, W^*_{u, v, k_1, k_2, \ldots, k_n}K_w\rangle=\left\langle f, \overline{u(w)}K^{[k_1, k_2, \ldots, k_n]}_{v(w)}\right\rangle\;\mbox{for all}\; f\in\ H^2(\mathbb{D}^n)	
	\end{align*}
	which leads the following conclusion
	\begin{align*}
	W^*_{u, v, k_1, k_2, \ldots, k_n}K_w(z)=\overline{u(w)}K^{[k_1, k_2, \ldots, k_n]}_{v(w)}(z).	
	\end{align*}
	This completes the proof.  
\end{proof}
In the following result, we are aiming to examine the circumstances under which specific combinations of $u$ and $v_j(j=1, 2, \ldots, n)$ give rise to $\mathcal{J}$-symmetric weighted composition-differentiation operators $W_{u, v}$.

\begin{thm}\label{thm-2.1}
	Let $u:\mathbb{D}^n\to \mathbb{C} $ be a nonzero analytic function and, let $v:\mathbb{D}^n\to\mathbb{D}^n $ with $v(z_1, z_2, \ldots, z_n)=(v_1(z_1), v_2(z_2),\ldots, v_n(z_n))$ be an analytic self-map of $\mathbb{D}^n$ such that $W_{u, v}$ is bounded on $H^2(\mathbb{D}^n)$. Then $W_{u, v}$ is $\mathcal{J}$-symmetric if, and only if,
	\begin{align*}
		u(z_1, z_2, \ldots, z_n)=a\prod_{i=1}^{n}\frac{1}{(1-c_iz_i)}\;\mbox{and}\; v_j(z_j)=\frac{(d_j-c_j^2)z_j+c_j}{1-c_jz_j}\;\; \mbox{for}\; j=1, 2, \ldots, n,
	\end{align*}
	where $ a=u(0, 0, \ldots, 0) $ and $c_j=v_j(0)$, $d_j=v^\prime(0)$ for $j=1, 2, \ldots, n $.
\end{thm}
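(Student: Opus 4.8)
The plan is to reduce the operator identity $W_{u,v}=\mathcal{J}W_{u,v}^{*}\mathcal{J}$ to a scalar functional equation by testing it against the reproducing kernels, whose linear span is dense in $H^2(\mathbb{D}^n)$. Since $W_{u,v}$ is bounded by hypothesis and $\mathcal{J}W_{u,v}^{*}\mathcal{J}$ is bounded as well (because $\mathcal{J}$ is an isometric conjugation), the two operators agree if and only if $W_{u,v}K_w=\mathcal{J}W_{u,v}^{*}\mathcal{J}K_w$ for every $w\in\mathbb{D}^n$. First I would record the action of $\mathcal{J}$ on kernels, which a direct computation gives as $\mathcal{J}K_w=K_{\bar w}$, where $\bar w=(\bar w_1,\ldots,\bar w_n)$. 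Combining this with Lemma \ref{lem-2.1} and the conjugate-linearity of $\mathcal{J}$, the right-hand side evaluates to $u(\bar w)\,K_{\overline{v(\bar w)}}$, whereas the left-hand side is $W_{u,v}K_w(z)=u(z)\prod_{i=1}^{n}(1-\bar w_i v_i(z_i))^{-1}$.

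Writing $\lambda_i=\bar w_i$, so that $\lambda$ ranges over $\mathbb{D}^n$ as $w$ does, turns the kernel identity into the holomorphic functional equation
\[
u(z)\prod_{i=1}^{n}\frac{1}{1-\lambda_i v_i(z_i)}=u(\lambda)\prod_{i=1}^{n}\frac{1}{1-v_i(\lambda_i)z_i},\qquad (z,\lambda)\in\mathbb{D}^n\times\mathbb{D}^n.
\]
Setting $z=0$ immediately yields $u(\lambda)=u(0)\prod_{i=1}^{n}(1-c_i\lambda_i)^{-1}$ with $c_i=v_i(0)$, which is precisely the asserted form of $u$ with $a=u(0,0,\ldots,0)$.

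Next I would substitute this expression for $u$ back into the functional equation; after cancelling $a$ and taking reciprocals, it becomes an equality of products over the index $i$. To separate the variables I would fix $z_j=\lambda_j=0$ for all $j\neq i$, which collapses every factor except the $i$-th to $1$ and leaves, for each $i$, the single-variable identity $(1-c_i z_i)(1-\lambda_i v_i(z_i))=(1-c_i\lambda_i)(1-v_i(\lambda_i)z_i)$. Solving this for $v_i$ is the crux of the argument: differentiating in $\lambda_i$, evaluating at $\lambda_i=0$, and using $v_i(0)=c_i$ together with $v_i'(0)=d_i$ collapses the right-hand side to a linear polynomial in $z_i$ and produces $v_i(z_i)(c_i z_i-1)=-c_i+(c_i^{2}-d_i)z_i$, which rearranges to the claimed Möbius form $v_i(z_i)=\frac{(d_i-c_i^{2})z_i+c_i}{1-c_i z_i}$.

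For the converse I would insert the two prescribed forms into the single-variable factor equation and check that it is an identity: each side expands to $1-c_i z_i-c_i\lambda_i-(d_i-c_i^{2})z_i\lambda_i$, so equality holds by the manifest symmetry in $z_i$ and $\lambda_i$; multiplying the factors and reinstating $u$ recovers the full kernel identity, whence $W_{u,v}=\mathcal{J}W_{u,v}^{*}\mathcal{J}$ first on the dense span of kernels and then everywhere. I expect the main obstacle to be the middle passage rather than the final computation: one must track the conjugate-linearity of $\mathcal{J}$ carefully, convert the anti-holomorphic dependence on $w$ into the genuinely holomorphic variable $\lambda$, and justify the variable separation across the $n$-fold product, after which the isolated single-variable equation is resolved cleanly by the differentiation-at-zero trick.
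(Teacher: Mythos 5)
Your proposal is correct and follows essentially the same route as the paper's proof: both reduce $\mathcal{J}$-symmetry to the kernel functional equation $u(z)\prod_{i}(1-\lambda_i v_i(z_i))^{-1}=u(\lambda)\prod_{i}(1-v_i(\lambda_i)z_i)^{-1}$, extract $u$ by setting one set of variables to zero, recover each $v_j$ by a differentiate-at-the-origin argument, and verify the converse via the symmetry of $1-c_iz_i-c_i\lambda_i-(d_i-c_i^2)z_i\lambda_i$ in the two variables. The only cosmetic differences are that you specialize to a single coordinate before differentiating (the paper differentiates the full $n$-variable identity and then specializes), and you test the identity in the form $W_{u,v}K_w=\mathcal{J}W^*_{u,v}\mathcal{J}K_w$ rather than the paper's equivalent $W_{u,v}\mathcal{J}K_w=\mathcal{J}W^*_{u,v}K_w$.
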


If we substitute the polydisk $\mathbb{D}^n$ with the bidisk $\mathbb{D}^2$, then Theorem \ref{thm-2.1} becomes a more comprehensive version of \cite[Theorem 2.1]{Han-Wang-Wu-AMS-2023}, which Han $et$ $al$ \cite{Han-Wang-Wu-AMS-2023}  recently established. Consequently, we can easily derive the following result.
\begin{cor}
Let $u:\mathbb{D}^2\to \mathbb{C} $ be a nonzero analytic function and, let $v:\mathbb{D}^2\to\mathbb{D}^2 $ with $v(z_1, z_2)=(v_1(z_1), v_2(z_2))$ be an analytic self-map of $\mathbb{D}^2$ such that $W_{u, v}$ is bounded on $H^2(\mathbb{D}^2)$. Then $W_{u, v}$ is $\mathcal{J}$-symmetric if, and only if,
	\begin{align*}
	u(z_1, z_2)=\frac{a}{(1-cz_1)(1-dz_2)},\; v_1(z_1)=\frac{(e-c^2)z_1+c}{1-cz_1},\; v_2(z_2)=\frac{(f-d^2)z_2+d}{1-dz_2},
	\end{align*}
	where $a=u(0, 0),\; c=v_1(0),\; d=v_2(0),\; e=v^\prime_1(0),\; f=v^\prime_2(0)$.
\end{cor}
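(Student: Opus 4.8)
The plan is to test the complex-symmetry identity $W_{u,v} = \mathcal{J} W_{u,v}^* \mathcal{J}$ on the reproducing kernels $K_w$, which span a dense subspace of $H^2(\mathbb{D}^n)$. Since $W_{u,v}$ is bounded and $\mathcal{J}$ is an antilinear isometry, the linear operators $W_{u,v}$ and $\mathcal{J} W_{u,v}^* \mathcal{J}$ coincide if and only if they agree on every $K_w$. The starting observation is that $\mathcal{J}$ acts on kernels by conjugating the parameter: a direct computation from $K_w(z) = \prod_{i=1}^{n} (1 - \overline{w_i} z_i)^{-1}$ gives $\mathcal{J} K_w = K_{\overline{w}}$. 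Combining this with Lemma \ref{lem-2.1}, I would compute
\[
\mathcal{J} W_{u,v}^* \mathcal{J} K_w = \mathcal{J} W_{u,v}^* K_{\overline{w}} = \mathcal{J}\left(\overline{u(\overline{w})}\, K_{v(\overline{w})}\right) = u(\overline{w})\, K_{\overline{v(\overline{w})}},
\]
the antilinearity of $\mathcal{J}$ being used in the last step. On the other hand, $W_{u,v} K_w(z) = u(z) \prod_{i=1}^{n} (1 - \overline{w_i} v_i(z_i))^{-1}$. Equating the two expressions yields the master identity
\[
u(z)\prod_{i=1}^{n}\frac{1}{1 - \overline{w_i}\, v_i(z_i)} = u(\overline{w})\prod_{i=1}^{n}\frac{1}{1 - v_i(\overline{w_i})\, z_i}, \qquad z, w \in \mathbb{D}^n,
\]
which must hold for all $z$ and $w$ precisely when $W_{u,v}$ is $\mathcal{J}$-symmetric.

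For the necessity direction, I would extract the stated forms by specialization. Setting $w = 0$ collapses the left product to $u(z)$ and the right to $u(0) \prod_{i=1}^{n} (1 - v_i(0) z_i)^{-1}$, giving $u(z) = a \prod_{i=1}^{n} (1 - c_i z_i)^{-1}$ with $a = u(0)$ and $c_i = v_i(0)$ at once. Substituting this back and then setting $w_i = 0$ for all $i \neq j$ causes every factor outside the $j$-th coordinate to cancel on both sides, reducing the master identity to the single-variable functional equation
\[
(1 - c_j x)\left(1 - t\, v_j(x)\right) = (1 - c_j t)\left(1 - x\, v_j(t)\right), \qquad x, t \in \mathbb{D},
\]
where $t = \overline{w_j}$ and $x = z_j$. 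Solving this is the crux of the argument: I would isolate $(1-c_j x)\, t\, v_j(x)$, divide by $t$, and let $t \to 0$ using the expansion $v_j(t) = c_j + d_j t + O(t^2)$ with $d_j = v_j'(0)$; the apparently singular $c_j/t$ contributions cancel, leaving $(1 - c_j x)\, v_j(x) = c_j + (d_j - c_j^2) x$, which is exactly the asserted Möbius form for $v_j$.

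For the sufficiency direction, I would reverse the logic. Assuming $u$ and the $v_j$ have the stated forms, it suffices to verify the master identity; since both sides factor over the coordinates, it is enough to check the single-variable identity $(1 - c_j x)(1 - t v_j(x)) = (1 - c_j t)(1 - x v_j(t))$ for the explicit Möbius $v_j$, a routine verification by cross-multiplication. Having confirmed the identity on all $K_w$, density of the kernels together with boundedness of both operators then closes the argument.

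The step I expect to be the main obstacle is solving the functional equation for $v_j$: the separation-of-variables specialization must be justified carefully, and the extraction of $v_j$ via the $t \to 0$ limit (equivalently, differentiation in $t$ at the origin) requires watching the cancellation of the singular terms. The bookkeeping in passing from the $n$-variable master identity to the decoupled single-variable equations — verifying that setting the remaining coordinates of $w$ to zero genuinely cancels the off-coordinate factors on both sides — is where most of the care is needed.
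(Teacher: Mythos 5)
Your proof is correct and takes essentially the same route as the paper: the paper obtains this corollary as the $n=2$ case of Theorem \ref{thm-2.1}, whose proof likewise tests the $\mathcal{J}$-symmetry identity on reproducing kernels, sets $w=0$ to extract the form of $u$, and differentiates at the origin (your $t\to 0$ limit) to extract the M\"obius form of each $v_j$, with the converse verified by the same symmetric algebraic identity $1-c_jx-c_jt-(d_j-c_j^2)xt$ on both sides. The only cosmetic difference is bookkeeping: you route the conjugation through $\mathcal{J}K_w=K_{\overline{w}}$, so your master identity is the paper's equation \eqref{Eq-2.4} with $w$ replaced by $\overline{w}$.
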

\begin{proof}[\bf Proof of Theorem \ref{thm-2.1}]
	First we suppose that $W_{u, v}$ is  $\mathcal{J}$-symmetric. Then, we have 
	\begin{align}\label{Eq-2.1}
		W_{u, v}\mathcal{J}K_w(z)=\mathcal{J}W^*_{u, v}K_w(z)
	\end{align}
for all $z=(z_1, z_2, \ldots, z_n),w=(w_1, w_2, \ldots, w_n)\in\mathbb{D}^n$. An easy computation shows that 
\begin{align}\label{Eq-2.2}
W_{u, v}\mathcal{J}K_w(z)=u(z)\mathcal{J}K_w(v(z))=u(z_1,z_2, \ldots,z_n)\prod_{i=1}^{n}\frac{1}{(1-w_iv_i(z_i))}	
\end{align}
and
\begin{align}\label{Eq-2.3}
	\mathcal{J}W^*_{u, v}K_w(z)=\mathcal{J}\overline{u(w)}K_{v(w)}(z)=u(w)\overline{K_{v(w)}}(\bar{z})=u(w_1, w_2, \ldots, w_n)\prod_{i=1}^{n}\frac{1}{(1-z_iv_i(w_i))}.
\end{align}
With the help of equations \eqref{Eq-2.2} and \eqref{Eq-2.3}, we obtain from \eqref{Eq-2.1} that
\begin{align}\label{Eq-2.4}
	u(z_1,z_2, \ldots,z_n)\prod_{i=1}^{n}\frac{1}{(1-w_iv_i(z_i))}=u(w_1, w_2, \ldots, w_n)\prod_{i=1}^{n}\frac{1}{(1-z_iv_i(w_i))}
\end{align}
By substituting $ w_1=w_2=\ldots=w_n=0$ in \eqref{Eq-2.4}, we obtain
\begin{align}\label{Eq-2.5}
	u(z_1,z_2, \ldots,z_n)=u(0, 0, \ldots, 0)\prod_{i=1}^{n}\frac{1}{(1-v_i(0)z_i)}.
\end{align}
Since $u$ is not identically zero on $\mathbb{D}^n$, we have that $u(0, 0, \ldots, 0)\neq0$. By the virtue of equations \eqref{Eq-2.4} and \eqref{Eq-2.5}, it follows that
\begin{align}\label{Eq-2.6}
	\prod_{i=1}^{n}(1-v_i(0)z_i)\times\prod_{i=1}^{n}(1-w_iv_i(z_i))=	\prod_{i=1}^{n}(1-v_i(0)w_i)\times\prod_{i=1}^{n}(1-z_iv_i(w_i)).
\end{align}
By differentiating \eqref{Eq-2.6} partially with respect to $w_j$ (where $1\leq j\leq n$), we obtain
\begin{align}\label{Eq-2.7}
		\prod_{i=1}^{n}&(1-v_i(0)z_i)\times\prod_{i\neq j}^{n}(1-w_iv_i(z_i))\times(-v_j(z_j))\\=&\prod_{i\neq j}^{n}(1-v_i(0)w_i)\times\prod_{i=1}^{n}(1-z_iv_i(w_i))\times(-v_j(0))\nonumber\\&+\prod_{i=1}^{n}(1-v_i(0)w_i)\times\prod_{i\neq j}^{n}(1-z_iv_i(w_i))\times(-z_jv^\prime_j(w_j))\nonumber.
\end{align} 
Letting $z_i=0$ for $i\neq j$ and $w_i=0$ for all $ i=1, 2, \ldots, n$ in \eqref{Eq-2.7}, we obtain
\begin{align*}
	(1-v_j(0)z_j)v_j(z_j)=z_jv^\prime_j(0)+(1-v_j(0)z_j)v_j(0).
\end{align*}
This implies that 
\begin{align*}
	v_j(z_j)=\frac{(v^\prime_j(0)-v^2_j(0))z_j+v_j(0)}{	1-v_j(0)z_j}.
\end{align*}
Thus, we obtain
\begin{align*}
	u(z_1, z_2, \ldots, z_n)=a\prod_{i=1}^{n}\frac{1}{(1-c_iz_i)}\;\mbox{and}\; v_j(z_j)=\frac{(d_j-c_j^2)z_j+c_j}{1-c_jz_j}\; \mbox{for}\; j=1, 2, \ldots, n
\end{align*}
with $ a=u(0, 0, \ldots, 0) $ and $c_j=v_j(0)$, $d_j=v^\prime(0)$ for $j=1, 2, \ldots, n $.\vspace{2mm}

Conversely, we suppose that 
\begin{align}\label{Eq-2.8}
	u(z_1, z_2, \ldots, z_n)=a\prod_{i=1}^{n}\frac{1}{(1-c_iz_i)}\;\mbox{and}\; v_j(z_j)=\frac{(d_j-c_j^2)z_j+c_j}{1-c_jz_j}\; \mbox{for}\; j=1, 2, \ldots, n
\end{align}
with $ a=u(0, 0, \ldots, 0) $ and $c_j=v_j(0)$, $d_j=v^\prime(0)$ for $j=1, 2, \ldots, n $. By using \eqref{Eq-2.8}, we have from equations \eqref{Eq-2.2} and \eqref{Eq-2.3} that
\begin{align}\label{Eq-2.9}
	W_{u, v}\mathcal{J}K_w(z)=&a\prod_{i=1}^{n}\frac{1}{(1-c_iz_i)}\times\prod_{i=1}^{n}\frac{1}{\bigg(1-w_i\bigg(\frac{(d_i-c_i^2)z_i+c_i}{1-c_iz_i}\bigg)\bigg)}\\=&a\prod_{i=1}^{n}\frac{1}{(1-c_iz_i-w_i((d_i-c_i^2)z_i+c_i))}\nonumber
\end{align}
and 
\begin{align}\label{Eq-2.10}
\mathcal{J}W^*_{u, v}K_w(z)=&a\prod_{i=1}^{n}\frac{1}{(1-c_iw_i)}\times\prod_{i=1}^{n}\frac{1}{\bigg(1-z_i\bigg(\frac{(d_i-c_i^2)w_i+c_i}{1-c_iw_i}\bigg)\bigg)}\\=&a\prod_{i=1}^{n}\frac{1}{(1-c_iw_i-z_i((d_i-c_i^2)w_i+c_i))}\nonumber.	
\end{align}
Since
\begin{align*}
	1-c_iz_i-w_i((d_i-c_i^2)z_i+c_i)=1-c_iw_i-z_i((d_i-c_i^2)w_i+c_i),
\end{align*} we obtain from equations \eqref{Eq-2.9} and \eqref{Eq-2.10} that $	W_{u, v}\mathcal{J}K_w(z)=\mathcal{J}W^*_{u, v}K_w(z)$, which implies that $W_{u, v}$ is $\mathcal{J}$-symmetric. This completes the proof.
\end{proof}
The following result centers on the investigation of which combinations of $u$ and $v$ create complex symmetric weighted composition operators $W_{u, v, k_1, k_2, \ldots, k_n}$ with respect to the standard conjugation $\mathcal{J}$.

\begin{thm}\label{thm-2.2}
	Let $u:\mathbb{D}^n\to \mathbb{C} $ be a nonzero analytic function and, let $v:\mathbb{D}^n\to\mathbb{D}^n $ with $v(z_1, z_2, \ldots, z_n)=(v_1(z_1), v_2(z_2),\ldots, v_n(z_n))$ be an analytic self-map of $\mathbb{D}^n$ such that  $W_{u, v, k_1, k_2, \ldots, k_n}$ is bounded on $H^2(\mathbb{D}^n)$. Then $W_{u, v, k_1, k_2, \ldots, k_n}$ is $\mathcal{J}$-symmetric if, and only if,
	\begin{align*}
		u(z_1, z_2, \ldots, z_n)=a\prod_{i=1}^{n}\frac{z^{k_i}_i}{(1-c_iz_i)^{k_i+1}}\;\mbox{and}\; v_j(z_j)=\frac{(d_j-c_j^2)z_j+c_j}{1-c_jz_j}\; \mbox{for}\; j=1, 2, \ldots, n,
	\end{align*}
	where $ a=\frac{\partial^{m}}{\partial z^{k_1}_1\partial z^{k_2}_2 \ldots \partial z^{k_n}_n} u(0, 0, \ldots, 0) $ and $c_j=v_j(0)$, $d_j=v^\prime(0)$ ($j=1, 2, \ldots, n $).
\end{thm}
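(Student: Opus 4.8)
The plan is to run the reproducing-kernel argument of Theorem~\ref{thm-2.1} with the composition–differentiation operator $T:=W_{u,v,k_1,\ldots,k_n}$ in place of $W_{u,v}$ and Lemma~\ref{lem-2.2} in place of Lemma~\ref{lem-2.1}. As there, $\mathcal{J}$-symmetry of $T$ means $T=\mathcal{J}T^*\mathcal{J}$, equivalently $T\mathcal{J}=\mathcal{J}T^*$; since $\mathcal{J}^2=I$ and $\{K_w:w\in\mathbb{D}^n\}$ spans a dense subspace, and $T\mathcal{J}$, $\mathcal{J}T^*$ are both bounded (antilinear), this is equivalent to verifying $T\mathcal{J}K_w=\mathcal{J}T^*K_w$ for every $w\in\mathbb{D}^n$. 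So the first step is to compute both sides. Using $\mathcal{J}K_w(z)=\prod_i(1-w_iz_i)^{-1}$ and differentiating, the left side becomes $u(z)\prod_i k_i!\,w_i^{k_i}(1-w_iv_i(z_i))^{-(k_i+1)}$, while Lemma~\ref{lem-2.2} and the action of $\mathcal{J}$ give the right side as $u(w)\prod_i k_i!\,z_i^{k_i}(1-v_i(w_i)z_i)^{-(k_i+1)}$. Cancelling $\prod_i k_i!$ yields the master functional equation
\begin{align*}
u(z)\prod_{i=1}^n\frac{w_i^{k_i}}{(1-w_iv_i(z_i))^{k_i+1}}=u(w)\prod_{i=1}^n\frac{z_i^{k_i}}{(1-v_i(w_i)z_i)^{k_i+1}},\qquad z,w\in\mathbb{D}^n.\tag{$\ast$}
\end{align*}

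For the forward direction I would first extract the monomial factor of $u$. Fix $z^0\in\mathbb{D}^n$ with all coordinates nonzero and $u(z^0)\neq0$ (such a point exists since $u\not\equiv0$). Solving $(\ast)$ for $u(w)$ with $z=z^0$ frozen gives $u(w)=\prod_i w_i^{k_i}\cdot\widetilde u(w)$ with $\widetilde u$ analytic on $\mathbb{D}^n$, because all the remaining factors $(1-v_i(z^0_i)w_i)^{-(k_i+1)}$ and $(1-v_i(w_i)z^0_i)^{k_i+1}$ are analytic there and the denominators never vanish for $w,z^0\in\mathbb{D}^n$. Hence $P(z):=u(z)/\prod_i z_i^{k_i}$ is analytic, and dividing $(\ast)$ by $\prod_i(z_iw_i)^{k_i}$ collapses it to
\begin{align*}
P(z)\prod_{i=1}^n\frac{1}{(1-w_iv_i(z_i))^{k_i+1}}=P(w)\prod_{i=1}^n\frac{1}{(1-v_i(w_i)z_i)^{k_i+1}},
\end{align*}
which has exactly the shape of \eqref{Eq-2.4}, only with each factor raised to the power $k_i+1$. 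Setting $w=0$ gives $P(z)=P(0)\prod_i(1-c_iz_i)^{-(k_i+1)}$ with $c_i=v_i(0)$, that is, the asserted form $u(z)=a\prod_i z_i^{k_i}(1-c_iz_i)^{-(k_i+1)}$, the constant $a$ being read off from $P(0)$ (equivalently from the mixed partial of $u$ at the origin, up to the appropriate normalization).

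To recover the $v_j$, substitute this form of $P$ back and cancel $P(0)\neq0$; the displayed identity becomes $\prod_i\bigl[(1-c_iz_i)(1-w_iv_i(z_i))\bigr]^{k_i+1}=\prod_i\bigl[(1-c_iw_i)(1-v_i(w_i)z_i)\bigr]^{k_i+1}$. Freezing $z_i=w_i=0$ for $i\neq j$ isolates the $j$-th factor, and a continuity argument (the ratio of the two bases is an analytic $(k_j+1)$-st root of unity equal to $1$ at the origin, hence $\equiv1$) strips off the exponent, leaving the degree-one identity $(1-c_jz_j)(1-w_jv_j(z_j))=(1-c_jw_j)(1-v_j(w_j)z_j)$. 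From here the computation is identical to Theorem~\ref{thm-2.1}: differentiating in $w_j$ and putting $w_j=0$ gives $v_j(z_j)=\bigl((d_j-c_j^2)z_j+c_j\bigr)/(1-c_jz_j)$. For the converse one substitutes the two prescribed forms into $(\ast)$; the two sides agree coordinatewise once one checks $(1-c_iz_i)(1-w_iv_i(z_i))=(1-c_iw_i)(1-v_i(w_i)z_i)$, which is the same algebraic identity already verified between \eqref{Eq-2.9} and \eqref{Eq-2.10}.

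I expect the main obstacle to be the two places where the differentiation order $k_i$ genuinely enters and Theorem~\ref{thm-2.1} offers no guidance: establishing that $u$ must carry the factor $\prod_i z_i^{k_i}$ (so that $P$ is analytic and the problem reduces to the already-understood weighted-composition shape), and passing from the $(k_i+1)$-st-power identity to its base via the root-of-unity/continuity argument. Both the extraction of $u$ (dividing out the monomials \emph{before} setting $w=0$) and the per-coordinate freezing must be handled so that no spurious zeros are introduced; once these reductions are in place, everything downstream is a transcription of the $k_i=0$ case.
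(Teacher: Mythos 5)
Your proposal is correct, and its skeleton matches the paper's proof: the master equation $(\ast)$ is exactly the paper's \eqref{Eq-2.14} (after cancelling $\prod_i k_i!$), the weight $u$ is pinned down first, each $v_j$ is then recovered by differentiating in $w_j$ and evaluating at the origin, and the converse is verified through the same symmetric identity $1-c_iz_i-w_i((d_i-c_i^2)z_i+c_i)=1-c_iw_i-z_i((d_i-c_i^2)w_i+c_i)$. Where you genuinely diverge is in the two steps you yourself flag as the crux. First, to extract the monomial factor of $u$, the paper sets $z_i=0$ in \eqref{Eq-2.14} to conclude that $u$ vanishes on every coordinate hyperplane, writes $u=h\prod_i z_i^{t_i}$ with $h$ nonvanishing there, and eliminates $t_i\neq k_i$ by two separate contradiction arguments (its equations \eqref{Eq-2.15}--\eqref{Eq-2.17}); your frozen-variable trick — solving the functional equation for $u(w)$ at a generic fixed $z^0$ — exhibits the divisibility $u(w)=\prod_i w_i^{k_i}\,\widetilde u(w)$ with $\widetilde u$ analytic in one stroke, dispensing with both the factorization claim and the case analysis. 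Second, to recover $v_j$ the paper never strips the exponent: it differentiates the power identity \eqref{Eq-2.20} directly with respect to $w_j$, and the factor $(k_j+1)$ cancels on its own; you instead pass to the exponent-one identity via the root-of-unity/connectedness argument and then reuse the computation of Theorem \ref{thm-2.1} verbatim. Both routes are valid; yours costs one extra (correct) argument but buys a literal reduction to the $k_i=0$ case, and your cautious ``up to the appropriate normalization'' for $a$ is in fact more accurate than the paper's identification $a=\frac{\partial^{m}}{\partial z^{k_1}_1\cdots\partial z^{k_n}_n}u(0,\ldots,0)$, which misses a factor $\prod_i k_i!$. Two small points you should still make explicit when writing this up: $P(0)\neq 0$, since otherwise the identity $P(z)=P(0)\prod_i(1-c_iz_i)^{-(k_i+1)}$ would force $u\equiv 0$; and the identity obtained after dividing $(\ast)$ by $\prod_i(z_iw_i)^{k_i}$, valid a priori only where all coordinates are nonzero, extends to all of $\mathbb{D}^n\times\mathbb{D}^n$ by continuity (or the identity theorem).
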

\begin{rem}
	First, we have to find conditions on $c_i$ and $d_i$ such that $v_i(\mathbb{D})\subset\mathbb{D}$.
	A simple computation yields that
	\begin{align*}
		|v_i(z_i)|&=\bigg|\frac{(d_i-c_i^2)z_i+c_i}{1-c_iz_i}\bigg|\\&=\bigg|c_i+\frac{d_iz_i}{1-c_iz_i}\bigg|\\&\ge|c_i|-\bigg|\frac{d_iz_i}{1-c_iz_i}\bigg|\\&>|c_i|+\bigg|\frac{d_iz_i}{c_iz_i}\bigg|\\&=|c_i|+\bigg|\frac{d_i}{c_i}\bigg|.
	\end{align*}
	Therefore, if $|v_i(z_i)|<1$, then we see that $|c_i|+\bigg|\frac{d_i}{c_i}\bigg|<1$, which shows that $|d_i|<1-|c_i|^2$. Thus, it is easy to see that $v_i(\mathbb{D})\subset\mathbb{D}$ if $|d_i|<1-|c_i|^2$.\vspace{1.2mm} 
	
	Next, we have to find the conditions on $c_i$ and $d_i$ such that $	W_{u, v, k_1, k_2, \ldots, k_n}$ is bounded. A straightforward computation shows that 
	\begin{align*}
	||W_{u, v, k_1, k_2, \ldots, k_n}K_w(z)||=&\bigg|\bigg|u(z_1, z_2, \ldots, z_n)\prod_{i=1}^{n}\frac{k_i!\overline{w_i}^{k_i}}{(1-\overline{w_i}v_i(z_i))^{k_i+1}}\bigg|\bigg|\\=&\bigg|\bigg|a\prod_{i=1}^{n}\frac{z^{k_i}_i}{(1-c_iz_i)^{k_i+1}}\times\prod_{i=1}^{n}\frac{k_i!\overline{w_i}^{k_i}}{\bigg(1-\overline{w_i}\bigg(\frac{(d_i-c_i^2)z_i+c_i}{1-c_iz_i}\bigg)\bigg)^{k_i+1}}\bigg|\bigg|\\=&\bigg|\bigg|a\prod_{i=1}^{n}\frac{k_i!(\overline{w_i}z_i)^{k_i}}{(1-c_iz_i-\overline{w_i}((d_i-c_i^2)z_i+c_i))^{k_i+1}}\bigg|\bigg|\\\le&|a|\prod_{i=1}^{n}\frac{k_i!}{|(1-c_iz_i-d_i\overline{w_i}-c_i^2z_i\overline{w_i}+c_i\overline{w_i})|^{k_i+1}}\\\le&|a|\prod_{i=1}^{n}\frac{k_i!}{(1-|c_iz_i|-|d_iw_i|-|c_i^2z_iw_i|-|c_iw_i|)^{k_i+1}}\\\le&|a|\prod_{i=1}^{n}\frac{k_i!}{|(1-c_iz_i-d_i\overline{w_i}-c_i^2z_i\overline{w_i}+c_i\overline{w_i})|^{k_i+1}}\\\le&|a|\prod_{i=1}^{n}\frac{k_i!}{(1-|c_i|-|d_i|-|c_i^2|-|c_i|)^{k_i+1}}\\=&|a|\prod_{i=1}^{n}\frac{k_i!}{(1-2|c_i|-|d_i|-|c_i|^2)^{k_i+1}}.
	\end{align*}
	It follows that $||W_{u, v, k_1, k_2, \ldots, k_n}K_w(z)||$ is finite if $1-2|c_i|-|d_i|-|c_i|^2>0$. This turns out that $|d_i|<1-2|c_i|-|c_i|^2<(1-|c_i|)^2$. Since the span of the reproducing kernel functions is dense in  $H^2(\mathbb{D}^n)$, the quantity $||W_{u, v, k_1, k_2, \ldots, k_n}f(z)||$ is finite for all $f\in H^2(\mathbb{D}^n)$ if $|d_i|<(1-|c_i|)^2$. Therefore, the operator $W_{u, v, k_1, k_2, \ldots, k_n}$ is bounded on $H^2(\mathbb{D}^n)$ if $|d_i|<(1-|c_i|)^2$.
\end{rem}
\begin{proof}[\bf Proof of Theorem \ref{thm-2.2}]
Let $W_{u, v, k_1, k_2, \ldots, k_n}$ be $\mathcal{J}$-symmetric. Then, we have
\begin{align}\label{Eq-2.11}
W_{u, v, k_1, k_2, \ldots, k_n}\mathcal{J}K_w(z)=\mathcal{J}W^*_{u, v, k_1, k_2, \ldots, k_n}K_w(z)
\end{align}
for all $z=(z_1, z_2, \ldots, z_n),w=(w_1, w_2, \ldots, w_n)\in\mathbb{D}^n$. Moreover, it can be easily seen through computation that
\begin{align}\label{Eq-2.12}
W_{u, v, k_1, k_2, \ldots, k_n}\mathcal{J}K_w(z)=&u(z)\bigg(\frac{\partial^{m}}{\partial z^{k_1}_1\partial z^{k_2}_2 \ldots \partial z^{k_n}_n}\prod_{i=1}^{n}\frac{1}{(1-w_iz_i)}\bigg)(v(z))\\=&u(z_1, z_2, \ldots, z_n)\prod_{i=1}^{n}\frac{k_i!w^{k_i}_i}{(1-w_iv_i(z_i))^{k_i+1}}\nonumber	
\end{align}
and
\begin{align}\label{Eq-2.13}
	\mathcal{J}W^*_{u, v, k_1, k_2, \ldots, k_n}K_w(z)=&\mathcal{J}\overline{u(w)}K^{[k_1, k_2, \ldots, k_n]}_{v(w)}(z)\\=&u(w)\overline{K^{[k_1, k_2, \ldots, k_n]}_{v(w)}(\bar{z})}\nonumber\\=&u(w_1, w_2, \ldots, w_n)\prod_{i=1}^{n}\frac{k_i!z^{k_i}_i}{(1-z_iv_i(w_i))^{k_i+1}}\nonumber.
\end{align}
 Thus, the equation \eqref{Eq-2.11} is equivalent to
 \begin{align}\label{Eq-2.14}
 	u(z_1, z_2, \ldots, z_n)\prod_{i=1}^{n}\frac{k_i!w^{k_i}_i}{(1-w_iv_i(z_i))^{k_i+1}}=u(w_1, w_2, \ldots, w_n)\prod_{i=1}^{n}\frac{k_i!z^{k_i}_i}{(1-z_iv_i(w_i))^{k_i+1}}.
 \end{align}
Substituting $z_1=0, z_2=0, \ldots, z_n=0$ in \eqref{Eq-2.14}, we obtain $u(0, z_2, \ldots, z_n)=0, u(z_1, 0, \ldots, z_n)=0, \ldots, u(z_1, z_2, \ldots, z_{n-1},0)=0$ and $u(0, 0, \ldots, 0)=0$ respectively, which leads us to the conclusion that $u$ contains factors of $z_1, z_2, \ldots, z_n$.\vspace{1.2mm} 

Let $u(z_1, z_2, \ldots, z_n)=h(z_1, z_2, \ldots, z_n)\prod_{i=1}^{n}z^{t_i}_i$, where $t_i(i=1, 2,\ldots,n)\in\mathbb{N}$ and $h$ is analytic with $h(0, z_2, \ldots, z_n)\neq0, h(z_1, 0, \ldots, z_n)\neq0, \ldots, h(z_1, z_2, \ldots, z_{n-1},0)\neq0$ and $h(0, 0, \ldots, 0)\neq0$. Then, we obtain from \eqref{Eq-2.14} that
\begin{align}\label{Eq-2.15}
		h(z_1, z_2, \ldots, z_n)\prod_{i=1}^{n}\frac{k_i!w^{k_i}_iz^{t_i}_i}{(1-w_iv_i(z_i))^{k_i+1}}=h(w_1, w_2, \ldots, w_n)\prod_{i=1}^{n}\frac{k_i!w^{t_i}_iz^{k_i}_i}{(1-z_iv_i(w_i))^{k_i+1}}.
\end{align}
Now, we claim that $k_i=t_i$ for all $i=1, 2, \ldots, n$. If $k_i>t_i$, it follows from \eqref{Eq-2.15} that 
\begin{align}\label{Eq-2.16}
	h(z_1, z_2, \ldots, z_n)\prod_{i=1}^{n}\frac{k_i!w^{k_i-t_i}_i}{(1-w_iv_i(z_i))^{k_i+1}}=h(w_1, w_2, \ldots, w_n)\prod_{i=1}^{n}\frac{k_i!z^{k_i-t_i}_i}{(1-z_iv_i(w_i))^{k_i+1}}
\end{align} 
Setting $z_1=0$ in \eqref{Eq-2.16}, we obtain  $h(0, z_2, \ldots, z_n)=0$, which is a contradiction because $h(0, z_2, \ldots, z_n)\neq0$. If  $k_i<t_i$, we obtain from \eqref{Eq-2.15} that
\begin{align}\label{Eq-2.17}
	h(z_1, z_2, \ldots, z_n)\prod_{i=1}^{n}\frac{k_i!z^{t_i-k_i}_i}{(1-w_iv_i(z_i))^{k_i+1}}=h(w_1, w_2, \ldots, w_n)\prod_{i=1}^{n}\frac{k_i!w^{t_i-k_i}_i}{(1-z_iv_i(w_i))^{k_i+1}}.	
\end{align}
Putting $w_1=0$ in \eqref{Eq-2.17}, we get $h(z_1, z_2, \ldots, z_n)\equiv0$ on $\mathbb{D}^n$, which is a contradiction. Therefore $k_i=t_i$ for all $i=1, 2, \ldots, n$ and hence the equation \eqref{Eq-2.15} reduces to 
\begin{align}\label{Eq-2.18}
		h(z_1, z_2, \ldots, z_n)\prod_{i=1}^{n}\frac{1}{(1-w_iv_i(z_i))^{k_i+1}}=h(w_1, w_2, \ldots, w_n)\prod_{i=1}^{n}\frac{1}{(1-z_iv_i(w_i))^{k_i+1}}.
\end{align}
Letting $w_i=0$ for all $i=1, 2, \ldots, n$ in \eqref{Eq-2.18}, we obtain
\begin{align*}
		h(z_1, z_2, \ldots, z_n)=h(0, 0, \ldots, 0)\prod_{i=1}^{n}\frac{1}{(1-v_i(0)z_i)^{k_i+1}}
\end{align*}
and hence
\begin{align}\label{Eq-2.19}
		u(z_1, z_2, \ldots, z_n)=h(0, 0, \ldots, 0)\prod_{i=1}^{n}\frac{z^{k_i}_i}{(1-v_i(0)z_i)^{k_i+1}}.
\end{align}
Since $h(0, 0, \ldots, 0)\neq0$, it follows from equations \eqref{Eq-2.14} and\eqref{Eq-2.19} that
\begin{align}\label{Eq-2.20}
	\prod_{i=1}^{n}(1-v_i(0)z_i)^{k_i+1}\times\prod_{i=1}^{n}(1-w_iv_i(z_i))^{k_i+1}=\prod_{i=1}^{n}(1-v_i(0)w_i)^{k_i+1}\times\prod_{i=1}^{n}(1-z_iv_i(w_i))^{k_i+1}.
\end{align}
Taking the partial derivative with respect to the variable $w_j(1\le j\le n)$ in \eqref{Eq-2.20}, we obtain 
\begin{align}\label{Eq-2.21}
(k_j&+1)\prod_{i=1}^{n}(1-v_i(0)z_i)^{k_i+1}\times\prod_{i\neq j}^{n}(1-w_iv_i(z_i))^{k_i+1}\times(1-w_jv_j(z_j))^{k_j}(-v_j(z_j))\\=&(k_j+1)\prod_{i\neq j}^{n}(1-v_i(0)w_i)^{k_i+1}\times\prod_{i=1}^{n}(1-z_iv_i(w_i))^{k_i+1}\times(1-v_j(0)w_j)^{k_j}(-v_j(0))\nonumber\\&+(k_j+1)\prod_{i=1}^{n}(1-v_i(0)w_i)^{k_i+1}\times\prod_{i\neq j}^{n}(1-z_iv_i(w_i))^{k_i+1}\times(1-z_jv_j(w_j))^{k_j}(-z_jv^\prime_j(w_j))\nonumber.
\end{align}
Letting $w_i=0$ for all $i=1, 2, \ldots,n$ and $z_i=0$ for $i\neq j$ in \eqref{Eq-2.21}, we obtain
\begin{align*}
(1-v_j(0)z_j)^{k_j+1}(v_j(z_j))=(1-v_j(0)z_j)^{k_j+1}(v_j(0))+(1-v_j(0)z_j)^{k_j}(z_jv^\prime_j(0)), 	
\end{align*}
which implies that 
\begin{align*}
v_j(z_j)=\frac{(v^\prime_j(0)-v^2_j(0))z_j+v_j(0)}{1-v_j(0)z_j}.
\end{align*}
Thus, we easily obtain the following
\begin{align*}
	u(z_1, z_2, \ldots, z_n)=a\prod_{i=1}^{n}\frac{z^{k_i}_i}{(1-c_iz_i)}\;\mbox{and}\; v_j(z_j)=\frac{(d_j-c_j^2)z_j+c_j}{1-c_jz_j}\;\; \mbox{for}\; j=1, 2, \ldots, n,
\end{align*}
where $ a=\frac{\partial^{m}}{\partial z^{k_1}_1\partial z^{k_2}_2 \ldots \partial z^{k_n}_n} u(0, 0, \ldots, 0) $ and $c_j=v_j(0)$, $d_j=v^\prime(0)$( $j=1, 2, \ldots, n $).
\vspace{1.2mm}
 Conversely, suppose that
\begin{align*}
	u(z_1, z_2, \ldots, z_n)=a\prod_{i=1}^{n}\frac{z^{k_i}_i}{(1-c_iz_i)^{k_i+1}}\;\mbox{and}\; v_j(z_j)=\frac{(d_j-c_j^2)z_j+c_j}{1-c_jz_j}\; \mbox{for}\; j=1, 2, \ldots, n,
\end{align*}
where
\begin{align*}
	a=\frac{\partial^{m}}{\partial z^{k_1}_1\partial z^{k_2}_2 \ldots \partial z^{k_n}_n} u(0, 0, \ldots, 0)\;\; \mbox{and}\; c_j=v_j(0),\; d_j=v^\prime(0)\;\; \mbox{for}\;\; j=1, 2, \ldots, n ).
\end{align*}
 In view of \eqref{Eq-2.12} and \eqref{Eq-2.13}, a simple computation shows that
\begin{align}\label{Eq-2.22}
	W_{u, v, k_1, k_2, \ldots, k_n}\mathcal{J}K_w(z)=&u(z_1, z_2, \ldots, z_n)\prod_{i=1}^{n}\frac{k_i!w^{k_i}_i}{(1-w_iv_i(z_i))^{k_i+1}}\\=&a\prod_{i=1}^{n}\frac{z^{k_i}_i}{(1-c_iz_i)^{k_i+1}}\times\prod_{i=1}^{n}\frac{k_i!w^{k_i}_i}{\bigg(1-w_i\bigg(\frac{(d_i-c_i^2)z_i+c_i}{1-c_iz_i}\bigg)\bigg)^{k_i+1}}\nonumber\\=&a\prod_{i=1}^{n}\frac{k_i!(w_iz_i)^k_i}{(1-c_iz_i-w_i((d_i-c_i^2)z_i+c_i))^{k_i+1}}\nonumber
\end{align}
and
\begin{align}\label{Eq-2.23}
	\mathcal{J}W^*_{u, v, k_1, k_2, \ldots, k_n}K_w(z)=&u(w_1, w_2, \ldots, w_n)\prod_{i=1}^{n}\frac{k_i!z^{k_i}_i}{(1-z_iv_i(w_i))^{k_i+1}}\\=&a\prod_{i=1}^{n}\frac{w^{k_i}_i}{(1-c_iw_i)^{k_i+1}}\times\prod_{i=1}^{n}\frac{k_i!z^{k_i}_i}{\bigg(1-z_i\bigg(\frac{(d_i-c_i^2)w_i+c_i}{1-c_iw_i}\bigg)\bigg)^{k_i+1}}\nonumber\\=&a\prod_{i=1}^{n}\frac{k_i!(w_iz_i)^k_i}{(1-c_iw_i-z_i((d_i-c_i^2)w_i+c_i))^{k_i+1}}\nonumber
\end{align}
for any $z=(z_1, z_2, \ldots, z_n),w=(w_1, w_2, \ldots, w_n)\in\mathbb{D}^n$. Since 
\begin{align*}
1-c_iz_i-w_i((d_i-c_i^2)z_i+c_i)=1-c_iw_i-z_i((d_i-c_i^2)w_i+c_i),	
\end{align*}
we obtain from \eqref{Eq-2.22} and \eqref{Eq-2.23} that $	W_{u, v, k_1, k_2, \ldots, k_n}\mathcal{J}K_w(z)=	\mathcal{J}W^*_{u, v, k_1, k_2, \ldots, k_n}K_w(z)$, which implies that  $W_{u, v, k_1, k_2, \ldots, k_n}$ is $\mathcal{J}$-symmetric. This completes the proof.
\end{proof}
\section{Hermitian property of composition-differentiation operators on the Hardy space $H^2(\mathbb{D}^n)$ }
A bounded linear operator $T$ is Hermitian (self-adjoint) if $T^*=T$. Recent research has been focused on investigating the self-adjointness of weighted composition-differentiation operators in the reproducing kernel Hilbert space (see \cite{Fatehi-Moradi-2021, Lim-Khoi-JMAA-2018, Lo-Loh-JMAA-2023} and other related works). In 2021, Fatehi and Hummond \cite{Fatehi-Hammond-CAOT-2021} presented a complete characterization of the self-adjointness of weighted composition-differentiation operators in the Hardy space over the unit disk $\mathbb{D}$. In 2023, Liu \emph{et~al} \cite{Liu-Ponnusamy-Xie-LMA-2023} discussed the self-adjointness of the weighted composition-differentiation operator in the Bergman space $\mathcal{A}^2_{\alpha}(\mathbb{D})$. In 2012, Le \cite{Le-JMAA-2012} derived characterizations of self-adjoint weighted composition operators on Hilbert spaces of analytic functions over the unit ball $\mathbb{B}_n:=\{z\in\mathbb{C}^n:|z|<1\}$. On a similar note, in \cite{Han-Wang-JMAA-2019}, Han and Wang investigated the self-adjointness of the composition operator on Hilbert spaces of analytic functions over the same unit ball $\mathbb{B}_n$. No research has been carried out on the self-adjointness of the composition-differentiation operator in relation to the Hardy spaces over the polydisk $\mathbb{D}^n$.\vspace{1,2mm} 

Our aim is to investigate the self-adjoint properties over the polydisk $\mathbb{D}^n$. In the following result, we establish a condition that is both necessary and sufficient for the bounded composition-differentiation operator $W_{u, v, k_1, k_2, \ldots, k_n}$ to satisfy Hermitian properties on $H^2(\mathbb{D}^n)$.
\begin{thm}\label{thm-2.3}
Let $u:\mathbb{D}^n\to \mathbb{C} $ be a nonzero analytic function and, let $v:\mathbb{D}^n\to\mathbb{D}^n $ with $v(z_1, z_2, \ldots, z_n)=(v_1(z_1), v_2(z_2),\ldots, v_n(z_n))$ be an analytic self-map of $\mathbb{D}^n$ such that  $W_{u, v, k_1, k_2, \ldots, k_n}$ is bounded on $H^2(\mathbb{D}^n)$. Then $W_{u, v, k_1, k_2, \ldots, k_n}$ is Hermitian if, and only if,
\begin{align*}
	u(z_1, z_2, \ldots, z_n)=a\prod_{i=1}^{n}\frac{z^{k_i}_i}{(1-\bar{c_i}z_i)^{k_i+1}}\;\mbox{and}\; v_j(z_j)=\frac{(d_j-|c_j|^2)z_j+c_j}{1-\bar{c_j}z_j}\; \mbox{for}\; j=1, 2, \ldots, n,
\end{align*}
where $a,d_j(j=1, 2, \ldots, n)\in\mathbb{R}$ and $c_j(j=1, 2, \ldots, n)\in\mathbb{C}$.
\end{thm}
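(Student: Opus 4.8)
The plan is to exploit the fact that a bounded operator is Hermitian precisely when it agrees with its adjoint on the reproducing kernels, which form a total set in $H^2(\mathbb{D}^n)$; thus $W_{u, v, k_1, k_2, \ldots, k_n}$ is Hermitian if and only if $W_{u, v, k_1, k_2, \ldots, k_n}K_w = W^*_{u, v, k_1, k_2, \ldots, k_n}K_w$ for every $w\in\mathbb{D}^n$. The right-hand side is supplied directly by Lemma \ref{lem-2.2}, while the left-hand side is computed by differentiating $K_w(z)=\prod_i(1-\bar w_i z_i)^{-1}$ and evaluating at $v(z)$. Equating the two produces the master functional equation
\[
u(z)\prod_{i=1}^n \frac{\bar w_i^{k_i}}{(1-\bar w_i v_i(z_i))^{k_i+1}} = \overline{u(w)}\prod_{i=1}^n\frac{z_i^{k_i}}{(1-\overline{v_i(w_i)}z_i)^{k_i+1}},
\]
which is the Hermitian analogue of \eqref{Eq-2.14}, the only difference being that complex conjugates now appear on the right.

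The central structural observation, and the step I expect to be the main obstacle, is that both sides of this identity are holomorphic in $z$ and antiholomorphic in $w$. I would therefore pass to the independent holomorphic variable $\omega_i=\bar w_i$, replacing $\overline{u(w)}$ and $\overline{v_i(w_i)}$ by their coefficient-conjugated holomorphic counterparts. After this substitution the equation becomes a genuine holomorphic identity to which the machinery of the proof of Theorem \ref{thm-2.2} applies almost verbatim: setting $z_i=0$ forces $u$ to carry the factor $\prod_i z_i^{k_i}$, and writing $u=h\prod_i z_i^{t_i}$ and comparing orders of vanishing gives $t_i=k_i$, exactly as in \eqref{Eq-2.15}--\eqref{Eq-2.18}. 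Throughout I must keep careful track of which occurrences of $c_j=v_j(0)$ and $d_j=v_j'(0)$ get conjugated; this bookkeeping is precisely where the bars in $(1-\bar c_i z_i)$ and the terms $|c_j|^2$ of the statement originate.

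Next I would set $w=0$ in the reduced identity to obtain $h(z)=\overline{h(0)}\prod_i(1-\bar c_i z_i)^{-(k_i+1)}$; evaluating at $z=0$ gives $h(0)=\overline{h(0)}$, so the constant $a:=h(0)$ is real, establishing both the claimed form of $u$ and the reality of $a$. To recover $v_j$, I would substitute this form of $h$ back, cancel, and reduce to the single-variable identity $(1-\bar c_j z_j)(1-\omega_j v_j(z_j))=(1-c_j\omega_j)(1-v_j^\sharp(\omega_j)z_j)$ for each $j$ (where $v_j^\sharp$ denotes the coefficient-conjugate of $v_j$), obtained by sending the remaining variables to zero in the analogue of \eqref{Eq-2.20} and extracting $(k_j+1)$-st roots. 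Differentiating in $\omega_j$ at $\omega_j=0$ and using $v_j^\sharp(0)=\bar c_j$ and $(v_j^\sharp)'(0)=\overline{d_j}$ solves for $v_j$ as $((\overline{d_j}-|c_j|^2)z_j+c_j)/(1-\bar c_j z_j)$; computing $v_j'(0)$ from this expression returns $\overline{d_j}$, and since $d_j=v_j'(0)$ by definition this forces $d_j=\overline{d_j}$, i.e. $d_j\in\mathbb R$, which is exactly the reality constraint in the statement.

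Finally, for the converse I would insert the asserted forms of $u$ and $v_j$ into the two expressions for $W_{u, v, k_1, k_2, \ldots, k_n}K_w$ and $W^*_{u, v, k_1, k_2, \ldots, k_n}K_w$ and verify that they coincide. Both reduce, after clearing denominators, to $a\prod_i k_i!(z_i\bar w_i)^{k_i}$ divided by the $(k_i+1)$-st power of $1-\bar c_i z_i - c_i\bar w_i-(d_i-|c_i|^2)z_i\bar w_i$; the reality of $a$ and $d_i$ makes this denominator invariant under the exchange $z_i\leftrightarrow w_i$ together with conjugation, so the two sides agree on every $K_w$ and hence $W_{u, v, k_1, k_2, \ldots, k_n}=W^*_{u, v, k_1, k_2, \ldots, k_n}$. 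The only genuinely new difficulty relative to Theorem \ref{thm-2.2} is the antiholomorphic dependence on $w$; once it is handled by the change of variable $\omega=\bar w$, the argument runs in close parallel, with the reality of $a$ and $d_j$ emerging automatically from the conjugation symmetry of the equations.
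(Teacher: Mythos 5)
Your proposal is correct and follows essentially the same route as the paper's proof: the reproducing-kernel characterization of Hermitian operators, the same master equation (the paper's \eqref{Eq-2.27}), the factorization argument imported from Theorem \ref{thm-2.2} forcing $u=h\prod_i z_i^{k_i}$, setting $w=0$ to obtain the form of $u$ and the reality of $a$, a derivative in $\bar{w}_j$ at $w=0$ to recover $v_j$ and deduce $d_j\in\mathbb{R}$ from self-consistency of $v_j'(0)$, and direct verification of the converse. The only mechanical difference is that you extract $(k_j+1)$-st roots of the reduced two-variable identity before differentiating, whereas the paper differentiates the power identity \eqref{Eq-2.29} directly and then sets $w=0$; both are valid, your version needing only the standard remark that the quotient of the two sides is a continuous root-of-unity-valued function equal to $1$ at the origin on a connected set, hence identically $1$.
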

The defining formulas of $W_{u, v}$ and $W_{u, v, k_1, k_2, \ldots, k_n}$ make it evident that $W_{u, v, 0, 0, \ldots, 0} = W_{u, v}$. Moreover, the implications of Theorem \ref{thm-2.3} reveal that when $k_1=k_2=\ldots=k_n=0$, $W_{u, v, k_1, k_2, \ldots, k_n}$ coincides with $W_{u, v}$. Consequently, we can readily derive the following result from Theorem \ref{thm-2.3}.

\begin{cor}
	Let $u:\mathbb{D}^n\to \mathbb{C} $ be a nonzero analytic function and, let $v:\mathbb{D}^n\to\mathbb{D}^n $ with $v(z_1, z_2, \ldots, z_n)=(v_1(z_1), v_2(z_2),\ldots, v_n(z_n))$ be an analytic self-map of $\mathbb{D}^n$ such that  $W_{u, v}$ is bounded on $H^2(\mathbb{D}^n)$. Then $W_{u, v}$ is Hermitian if, and only if,
	\begin{align*}
		u(z_1, z_2, \ldots, z_n)=a\prod_{i=1}^{n}\frac{1}{(1-\bar{c_i}z_i)^{k_i+1}}\;\mbox{and}\; v_j(z_j)=\frac{(d_j-|c_j|^2)z_j+c_j}{1-\bar{c_j}z_j}\; \mbox{for}\; j=1, 2, \ldots, n,
	\end{align*}
	where $a,d_j(j=1, 2, \ldots, n)\in\mathbb{R}$ and $c_j(j=1, 2, \ldots, n)\in\mathbb{C}$.

\end{cor}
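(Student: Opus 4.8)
The plan is to characterize the Hermitian property through the operator identity $W_{u,v,k_1,\ldots,k_n}=W^*_{u,v,k_1,\ldots,k_n}$, tested against the reproducing kernels $K_w$, which span a dense subspace of $H^2(\mathbb{D}^n)$. First I would compute both sides on $K_w$. A direct differentiation of the kernel gives
\[
W_{u,v,k_1,\ldots,k_n}K_w(z)=u(z)\prod_{i=1}^{n}\frac{k_i!\,\overline{w_i}^{\,k_i}}{(1-\overline{w_i}v_i(z_i))^{k_i+1}},
\]
while Lemma \ref{lem-2.2} yields $W^*_{u,v,k_1,\ldots,k_n}K_w(z)=\overline{u(w)}\prod_{i=1}^{n}\frac{k_i!\,z_i^{k_i}}{(1-\overline{v_i(w_i)}z_i)^{k_i+1}}$. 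Equating these, after cancelling $\prod_i k_i!$, produces a master equation in which complex conjugates are now present throughout; this is the essential new feature compared with the purely holomorphic identity \eqref{Eq-2.14} used in the proof of Theorem \ref{thm-2.2}.

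For necessity, I would mirror the combinatorial steps of Theorem \ref{thm-2.2}. Setting individual coordinates $z_j=0$ forces $u$ to vanish on the coordinate hyperplanes, so one writes $u=h\prod_i z_i^{t_i}$ with $h$ non-vanishing on those slices, and a power-counting argument (testing $z_1=0$ and $w_1=0$ against $h\not\equiv 0$) shows $t_i=k_i$. After cancelling the monomials and letting all $w_i=0$, one obtains $h(z)=\overline{h(0)}\prod_i(1-\overline{c_i}z_i)^{-(k_i+1)}$ with $c_i=v_i(0)$. This is where reality first appears: evaluating at $z=0$ gives $h(0)=\overline{h(0)}$, so $a:=h(0)\in\mathbb{R}$ and $u$ takes the asserted form. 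Substituting back and cancelling the nonzero constant $a$, the resulting product identity separates coordinate-wise into $(1-\overline{c_i}z_i)(1-\overline{w_i}v_i(z_i))=(1-c_i\overline{w_i})(1-\overline{v_i(w_i)}z_i)$ for each $i$, since a continuous map of a connected set into the finite set of $(k_i+1)$-th roots of unity is constant and equals $1$ at the origin.

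To extract $v_i$, I would apply the Wirtinger derivative $\partial/\partial\overline{w_i}$ to this per-coordinate identity and then set $w_i=0$; writing $d_i=v_i'(0)$, this solves to $v_i(z_i)=\frac{(\overline{d_i}-|c_i|^2)z_i+c_i}{1-\overline{c_i}z_i}$. The reality of $d_i$ then falls out by self-consistency: differentiating this very expression and evaluating at $0$ returns $v_i'(0)=\overline{d_i}$, forcing $d_i=\overline{d_i}\in\mathbb{R}$, whereupon the formula collapses to the stated one. For sufficiency I would reverse the computation: inserting the stated $u$ and $v_i$ (with $a,d_i$ real) into the two displayed expressions for $W_{u,v,k_1,\ldots,k_n}K_w$ and $W^*_{u,v,k_1,\ldots,k_n}K_w$, a direct expansion shows that $1-\overline{c_i}z_i-c_i\overline{w_i}-(d_i-|c_i|^2)z_i\overline{w_i}$ equals itself after swapping $z_i\leftrightarrow w_i$ — an identity that uses precisely the reality of $a$ and $d_i$ — so the two kernels coincide and $W_{u,v,k_1,\ldots,k_n}=W^*_{u,v,k_1,\ldots,k_n}$ on a dense subspace.

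The main obstacle I expect is the bookkeeping of conjugates in the necessity direction. Unlike the $\mathcal{J}$-symmetric case, the master equation is only real-analytic, so the separation-of-variables step and the Wirtinger differentiation must be handled with care to keep $c_i$ and $\overline{c_i}$ in their correct places; and it is exactly the self-consistency of $d_i:=v_i'(0)$ against the derived M\"obius form that pins down the reality constraints $a,d_i\in\mathbb{R}$.
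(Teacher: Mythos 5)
Your argument is correct, but it does far more work than the paper does for this statement: the paper's own proof of this corollary is a one-liner, obtained by specializing Theorem \ref{thm-2.3} to $k_1=k_2=\cdots=k_n=0$ (since $W_{u,v}=W_{u,v,0,0,\ldots,0}$); incidentally, the exponents $k_i+1$ appearing in the corollary's statement are a leftover from the theorem and must be read as $1$, which is exactly what your general-$k$ argument yields upon specialization. What you have written is, in substance, a fresh proof of Theorem \ref{thm-2.3} itself, and it tracks the paper's proof of that theorem almost step for step: test $T=T^{*}$ on reproducing kernels via Lemma \ref{lem-2.2}, factor $u=h\prod_{i}z_i^{t_i}$ and power-count to get $t_i=k_i$, set $w=0$ to identify $h$, differentiate in $\bar{w_j}$ and set $w=0$ to obtain the M\"obius form of $v_j$, and use the self-consistency $v_j'(0)=\overline{d_j}$ to force $d_j\in\mathbb{R}$. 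Your only genuine departures are minor and both valid: (i) you extract $a\in\mathbb{R}$ immediately by evaluating $h(z)=\overline{h(0)}\prod_{i}(1-\bar{c_i}z_i)^{-(k_i+1)}$ at $z=0$, whereas the paper gets $\bar{a}=a$ one step later by putting $w=0$ in \eqref{Eq-2.29}; and (ii) before differentiating you strip the exponents $k_i+1$ by a coordinate-wise argument (fix the other coordinates at the origin, note each ratio is a continuous map of a connected set into the $(k_i+1)$-th roots of unity equal to $1$ at the origin, hence identically $1$), whereas the paper differentiates the exponentiated identity \eqref{Eq-2.29} directly and cancels the common factor $(k_j+1)$; your reduction buys a cleaner differentiation step at the cost of an extra (correct) topological observation. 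One loose phrase to fix: in your sufficiency direction, once $a,d_i\in\mathbb{R}$ the two denominators $1-\bar{c_i}z_i-c_i\bar{w_i}-(d_i-|c_i|^2)z_i\bar{w_i}$ and $1-c_i\bar{w_i}-\bar{c_i}z_i-(\bar{d_i}-|c_i|^2)z_i\bar{w_i}$ are literally identical, so no ``swap $z_i\leftrightarrow w_i$'' is involved.
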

\begin{proof}[\bf Proof of Theorem \ref{thm-2.3}]
	Let  $W_{u, v, k_1, k_2, \ldots, k_n}$ be Hermitian. Then $W^*_{u, v, k_1, k_2, \ldots, k_n}=W_{u, v, k_1, k_2, \ldots, k_n}$, which is equivalent to 
	\begin{align}\label{Eq-2.24}
		W^*_{u, v, k_1, k_2, \ldots, k_n}K_w(z)=W_{u, v, k_1, k_2, \ldots, k_n}K_w(z)
	\end{align}
 for all $w, z\in\mathbb{D}^n$. A simple computation shows that
 \begin{align}\label{Eq-2.25}
 		W_{u, v, k_1, k_2, \ldots, k_n}K_w(z)=&u(z)\bigg(\frac{\partial^{m}}{\partial z^{k_1}_1\partial z^{k_2}_2 \ldots \partial z^{k_n}_n}\prod_{i=1}^{n}\frac{1}{(1-\bar{w_i}z_i)}\bigg)(v(z))\\=&u(z_1, z_2, \ldots, z_n)\prod_{i=1}^{n}\frac{k_i!\bar{w_i}^{k_i}}{(1-\bar{w_i}v_i(z_i))^{k_i+1}}\nonumber
 \end{align}
and 
\begin{align}\label{Eq-2.26}
W^*_{u, v, k_1, k_2, \ldots, k_n}K_w(z)=&\overline{u(w)}K^{[k_1, k_2, \ldots, k_n]}_{v(w)}(z)\\=&\overline{u(w_1, w_2, \ldots, w_n)}\prod_{i=1}^{n}\frac{k_i!z^{k_i}_i}{(1-z_i\overline{v_i(w_i)})^{k_i+1}}\nonumber.	
\end{align} 
By using the equations \eqref{Eq-2.25} and \eqref{Eq-2.26}, we obtain from \eqref{Eq-2.24} that  
\begin{align}\label{Eq-2.27}
	u(z_1, z_2, \ldots, z_n)\prod_{i=1}^{n}\frac{k_i!\bar{w_i}^{k_i}}{(1-\bar{w_i}v_i(z_i))^{k_i+1}}=\overline{u(w_1, w_2, \ldots, w_n)}\prod_{i=1}^{n}\frac{k_i!z^{k_i}_i}{(1-z_i\overline{v_i(w_i)})^{k_i+1}}.
\end{align}
By applying a similar argument as in the proof of Theorem \ref{thm-2.2}, we can assume that $u(z_1, z_2, \ldots, z_n)=h(z_1, z_2, \ldots, z_n)\prod_{i=1}^{n}z^{k_i}_i$ and $h$ is analytic with $h(0, z_2, \ldots, z_n)\neq0, h(z_1, 0, \ldots, z_n)\neq0, \ldots, h(z_1, z_2, \ldots, z_{n-1},0)\neq0$ and $h(0, 0, \ldots, 0)\neq0$. Then, we obtain from \eqref{Eq-2.27} that
\begin{align}\label{Eq-2.28}
		h(z_1, z_2, \ldots, z_n)\prod_{i=1}^{n}\frac{1}{(1-\bar{w_i}v_i(z_i))^{k_i+1}}=\overline{h(w_1, w_2, \ldots, w_n)}\prod_{i=1}^{n}\frac{1}{(1-z_i\overline{v_i(w_i)})^{k_i+1}}.
\end{align}
Putting $w_i=0$ for all $i=1, 2, \ldots,n$ in \eqref{Eq-2.28}, we obtain 
\begin{align*}
	h(z_1, z_2, \ldots, z_n)=\overline{h(0, 0, \ldots, 0)}\prod_{i=1}^{n}\frac{1}{(1-\overline{v_i(0)}z_i)^{k_i+1}}
\end{align*}
and hence 
\begin{align*}
	u(z_1, z_2, \ldots, z_n)=a\prod_{i=1}^{n}\frac{z^{k_i}_i}{(1-\bar{c_i}z_i)^{k_i+1}}
\end{align*}
with $a=\overline{h(0, 0, \ldots, 0)}$ and $c_i=v_i(0)$ for $i=1, 2, \ldots, n$. Substituting this in \eqref{Eq-2.27}, we obtain
 \begin{align}\label{Eq-2.29}
 	\bar{a}\prod_{i=1}^{n}(1-\bar{c_i}z_i)^{k_i+1}\times\prod_{i=1}^{n}(1-\bar{w_i}v_i(z_i))^{k_i+1}=a\prod_{i=1}^{n}(1-c_i\bar{w_i})^{k_i+1}\times\prod_{i=1}^{n}(1-z_i\overline{v_i(w_i)})^{k_i+1}
 \end{align}
for any $z=(z_1, z_2, \ldots, z_n),w=(w_1, w_2, \ldots, w_n)\in\mathbb{D}^n$. If we set $w_1=w_2= \ldots= w_n=0$, then it is easy to see that $\bar{a}=a$, and hence $a\in\mathbb{R}$. Taking the derivative partially with respect to $\bar{w_j}(1\le j\le n)$ in \eqref{Eq-2.29}, we obtain
\begin{align}\label{Eq-2.30}
	(k_j+1)&\prod_{i=1}^{n}(1-\bar{c_i}z_i)^{k_i+1}\times\prod_{i\neq j}^{n}(1-\bar{w_i}v_i(z_i))^{k_i+1}\times(1-\bar{w_j}v_j(z_j))^{k_j}(-v_j(z_j))\\=&(k_j+1)\prod_{i\neq j}^{n}(1-c_i\bar{w_i})^{k_i+1}\times\prod_{i=1}^{n}(1-z_i\overline{v_i(w_i)})^{k_i+1}\times(1-c_j\bar{w_j})^{k_j}(-c_j)\nonumber\\&+(k_j+1)\prod_{i=1}^{n}(1-c_i\bar{w_i})^{k_i+1}\times\prod_{i\neq j}^{n}(1-z_i\overline{v_i(w_i)})^{k_i+1}\times(1-z_j\overline{v_j(w_j)})^{k_j}(-z_j\overline{v^\prime_j(w_j)})\nonumber
\end{align}
 for any $z=(z_1, z_2, \ldots, z_n),w=(w_1, w_2, \ldots, w_n)\in\mathbb{D}^n$. Setting $w_1=w_2= \cdots= w_n=0$ in \eqref{Eq-2.30}, we have 
 \begin{align*}
 	(1-\bar{c_j}z_j)^{k_j+1}v_j(z_j)=	(1-\bar{c_j}z_j)^{k_j+1}c_j+	(1-\bar{c_j}z_j)^{k_j}(z_j\overline{v^\prime_j(0)})
 \end{align*}
which implies that 
\begin{align}\label{Eq-2.31}
	v_j(z_j)=\frac{(1-\bar{c_j}z_j)c_j+z_j\overline{v^\prime_j(0)}}{1-\bar{c_j}z_j}=\frac{(d_j-|c_j|^2)z_j+c_j}{1-\bar{c_j}z_j} \; \mbox{for all} \; j=1, 2, \ldots, n
\end{align}
with $ d_j=\overline{v^\prime_j(0)}$ and $c_j=v_j(0)$.\vspace{2mm}

 Differentiating \eqref{Eq-2.31} partially with respect to $z_j$, we obtain $v^\prime_j(0)=d_j$. Therefore, we see that $\bar{d_j}=d_j$ which implies that $d_j(1\le j\le n)$ are real numbers. Conversely, assume that
\begin{align*}
	u(z_1, z_2, \ldots, z_n)=a\prod_{i=1}^{n}\frac{z^{k_i}_i}{(1-\bar{c_i}z_i)^{k_i+1}}\;\mbox{and}\; v_j(z_j)=\frac{(d_j-|c_j|^2)z_j+c_j}{1-\bar{c_j}z_j}\; \mbox{for}\; j=1, 2, \ldots, n,
\end{align*}
where $a,d_j(j=1, 2, \ldots, n)\in\mathbb{R}$ and $c_j(j=1, 2, \ldots, n)\in\mathbb{C}$. In view of \eqref{Eq-2.25} and \eqref{Eq-2.26}, we have
\begin{align}\label{Eq-2.32}
	W_{u, v, k_1, k_2, \ldots, k_n}K_w(z)=&u(z_1, z_2, \ldots, z_n)\prod_{i=1}^{n}\frac{k_i!\bar{w_i}^{k_i}}{(1-\bar{w_i}v_i(z_i))^{k_i+1}}\\=&a\prod_{i=1}^{n}\frac{z^{k_i}_i}{(1-\bar{c_i}z_i)^{k_i+1}}\times\prod_{i=1}^{n}\frac{k_i!\bar{w_i}^{k_i}}{\bigg(1-\bar{w_i}\bigg(\frac{(d_i-|c_i|^2)z_i+c_i}{1-\bar{c_i}z_i}\bigg)\bigg)^{k_i+1}}\nonumber\\=&a\prod_{i=1}^{n}\frac{k_i!(\bar{w_i}z_i)^{k_i}}{(1-\bar{c_i}z_i-\bar{w_i}((d_i-|c_i|^2)z_i+c_i))^{k_i+1}}\nonumber
\end{align}  
and
\begin{align}\label{Eq-2.33}
	W^*_{u, v, k_1, k_2, \ldots, k_n}K_w(z)=&\overline{u(w_1, w_2, \ldots, w_n)}\prod_{i=1}^{n}\frac{k_i!z^{k_i}_i}{(1-z_i\overline{v_i(w_i)})^{k_i+1}}\\=&\overline{a\prod_{i=1}^{n}\frac{w^{k_i}_i}{(1-\bar{c_i}w_i)^{k_i+1}}}\times\prod_{i=1}^{n}\frac{k_i!z^{k_i}_i}{\bigg(1-z_i(\overline{\bigg(\frac{(d_i-|c_i|^2)w_i+c_i}{1-\bar{c_i}w_i}\bigg)}\bigg)^{k_i+1}}\nonumber\\=&\bar{a}\prod_{i=1}^{n}\frac{k_i!(\bar{w_i}z_i)^{k_i}}{(1-c_i\bar{w_i}-z_i((\bar{d_i}-|c_i|^2)\bar{w_i}+\bar{c_i}))^{k_i+1}}\nonumber
\end{align}
 for any $z=(z_1, z_2, \ldots, z_n),w=(w_1, w_2, \ldots, w_n)\in\mathbb{D}^n$. Since
 \begin{align*}
 	1-\bar{c_i}z_i-\bar{w_i}((d_i-|c_i|^2)z_i+c_i)=&1-\bar{c_i}z_i-\bar{w_i}(d_i-|c_i|^2)z_i-c_i\bar{w_i}\\=&1-c_i\bar{w_i}-z_i((d_i-|c_i|^2)\bar{w_i}+\bar{c_i})
 \end{align*}
and $a,d_j(j=1, 2, \ldots, n)\in\mathbb{R}$, it follows from \eqref{Eq-2.32} and \eqref{Eq-2.33} that 
\begin{align*}
	W^*_{u, v, k_1, k_2, \ldots, k_n}K_w(z)=W_{u, v, k_1, k_2, \ldots, k_n}K_w(z),
\end{align*} 
which implies that $	W_{u, v, k_1, k_2, \ldots, k_n}$ is Hermitian on $H^2(\mathbb{D}^n)$. This completes the proof.
\end{proof}
\section{Sufficient condition for the normality of composition-differentiation operators on the Hardy space $H^2(\mathbb{D}^n)$}
Normal operators are those that commute with their own adjoint. In other words, a bounded linear operator T  is classified as normal if, and only if, $T^*T=TT^*$. In 2023, Liu \emph{et~al} \cite{Liu-Ponnusamy-Xie-LMA-2023} investigated the normality of composition-differentiation operators on the Bergman space, presenting necessary and sufficient conditions in their study. Lo and Loh \cite{Lo-Loh-JMAA-2023} investigated the class of normal generalized weighted composition operators on $H_{\gamma}$ and their relevance to the property of complex symmetry. In the last few years, substantial research has been dedicated to investigate the normality of weighted composition-differentiation operators in the reproducing kernel Hilbert space (see \cite{Fatehi-Moradi-2021, Fatehi-Hammond-CAOT-2021, Jun-Kim-Ko-Lee-JFA-2014, Bourdon-Narayan-JMAA-2010} and relevant references). Le \cite{Le-JMAA-2012} has conducted a study on the normality of weighted composition operators on Hilbert spaces of analytic functions over the unit ball $\mathbb{B}_n$. Similarly, Han and Wang \cite{Han-Wang-JMAA-2019} have explored the normality of the composition operator on Hilbert spaces of analytic functions over the same unit ball $\mathbb{B}_n$.\vspace{1.2mm}

 Until now, the entirety of normality research has been restricted to the unit disk and the unit ball, and it seems that no investigations have been carried on the polydisk. Our main aim is to fill this gap by exploring the normality properties over the polydisk and we obtain Theorem \ref{thm-2.4}. However, the significance of Theorem \ref{thm-2.4} lies in its limited scope, as it does not cover all normal weighted composition-differentiation operators on $H^2(\mathbb{D}^n)$. Therefore, we establish the precise conditions for an operator $W_{u, v, k_1, k_2, \ldots, k_n}$ to be normal, assuming that $u$ and $v_j(z_j)$ (where $j=1, 2, \ldots, n$) adopt a specific form that generalizes the symbols explained in Theorem \ref{thm-2.4}. Despite our efforts, we have not yet determined whether an operator $W_{u, v, k_1, k_2, \ldots, k_n}$, with $v_j(0)(0)\neq0$ for $j=1, 2, \ldots, n$, can be normal for different choices  of $u$ and $v_j(z_j)$.
We now establish a sufficient condition that guarantees the bounded operator $W_{u, v, k_1, k_2, \ldots, k_n}$ to be normal, given the complex symmetry under the standard conjugation $\mathcal{J}$.
\begin{thm}\label{thm-2.4}
	Let $u:\mathbb{D}^n\to \mathbb{C} $ be a nonzero analytic function and, let $v:\mathbb{D}^n\to\mathbb{D}^n $ with $v(z_1, z_2, \ldots, z_n)=(v_1(z_1), v_2(z_2),\ldots, v_n(z_n))$ be an analytic self-map of $\mathbb{D}^n$ such that  $W_{u, v, k_1, k_2, \ldots, k_n}$ is bounded on $H^2(\mathbb{D}^n)$. Suppose $W_{u, v, k_1, k_2, \ldots, k_n}$ is $\mathcal{J}$-symmetric and $v_j(0)=0$ for all $j=1, 2, \ldots, n$. Then $W_{u, v, k_1, k_2, \ldots, k_n}$ is normal.
\end{thm}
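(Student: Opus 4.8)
The plan is to exploit the explicit description of $\mathcal{J}$-symmetric operators furnished by Theorem \ref{thm-2.2}, specialize it using the hypothesis $v_j(0)=0$, and then observe that the resulting operator acts diagonally on the monomial orthonormal basis of $H^2(\mathbb{D}^n)$, whence normality is immediate. First, since $W_{u, v, k_1, k_2, \ldots, k_n}$ is assumed to be both $\mathcal{J}$-symmetric and bounded, Theorem \ref{thm-2.2} applies and gives
\[
u(z_1, \ldots, z_n) = a \prod_{i=1}^{n} \frac{z_i^{k_i}}{(1 - c_i z_i)^{k_i+1}}, \qquad v_j(z_j) = \frac{(d_j - c_j^2) z_j + c_j}{1 - c_j z_j},
\]
with $c_j = v_j(0)$ and $d_j = v_j'(0)$. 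The extra hypothesis $v_j(0) = 0$ forces $c_j = 0$ for every $j$, so these formulas collapse to the much simpler
\[
u(z_1, \ldots, z_n) = a \prod_{i=1}^{n} z_i^{k_i}, \qquad v_j(z_j) = d_j z_j .
\]

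Next I would compute the action of $W := W_{u, v, k_1, k_2, \ldots, k_n}$ directly on the standard orthonormal basis $\{z^p = \prod_{i=1}^{n} z_i^{p_i} : p_i \ge 0\}$ of $H^2(\mathbb{D}^n)$, inherited from the $L^2(\mathbb{T}^n)$ basis recalled in the introduction. Differentiating $z^p$ produces $\prod_i \frac{p_i!}{(p_i - k_i)!} z_i^{p_i - k_i}$ (and $0$ whenever some $p_i < k_i$); evaluating at $v(z) = (d_1 z_1, \ldots, d_n z_n)$ and multiplying by $u(z) = a \prod_i z_i^{k_i}$, the factors $z_i^{k_i}$ and $z_i^{p_i - k_i}$ recombine to $z_i^{p_i}$, leaving
\[
W(z^p) = a \left( \prod_{i=1}^{n} \frac{p_i!}{(p_i - k_i)!}\, d_i^{\,p_i - k_i} \right) z^p
\]
for $p_i \ge k_i$ and $W(z^p) = 0$ otherwise. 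Thus every basis vector is an eigenvector, so $W$ is a diagonal operator with respect to $\{z^p\}$.

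Finally, a diagonal operator is automatically normal: writing $W z^p = \lambda_p z^p$ one reads off $\langle W^* z^p, z^q\rangle = \langle z^p, W z^q\rangle = \overline{\lambda_p}\,\delta_{pq}$, hence $W^* z^p = \overline{\lambda_p} z^p$, and therefore $W^* W z^p = |\lambda_p|^2 z^p = W W^* z^p$. Since $W$ is bounded, so is $W^*$, and the two bounded operators $W^*W$ and $WW^*$ agree on the dense span of $\{z^p\}$, giving $W^*W = WW^*$; that is, $W$ is normal. I do not anticipate a genuine obstacle: the whole argument turns on the single observation that setting $c_j = v_j(0) = 0$ reduces the symbols to a pure monomial weight and a linear inner map, which is precisely what diagonalizes $W$. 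The only points meriting explicit mention are the orthonormality of the monomial basis and the use of boundedness to pass from the identity on basis vectors to the operator identity $W^*W = WW^*$.
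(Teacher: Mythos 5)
Your proposal is correct and takes essentially the same approach as the paper: both apply Theorem \ref{thm-2.2} under the hypothesis $v_j(0)=0$ to collapse the symbols to $u(z)=a\prod_{i=1}^{n}z_i^{k_i}$ and $v_j(z_j)=d_jz_j$, and then compute the action of the operator on the monomial orthonormal basis of $H^2(\mathbb{D}^n)$. The only minor difference is the final step: the paper concludes normality from the norm equality $\|W_{u,v,k_1,\ldots,k_n}e_{m_1,\ldots,m_n}\|=\|W^*_{u,v,k_1,\ldots,k_n}e_{m_1,\ldots,m_n}\|$ on basis vectors, whereas you make the diagonal structure explicit and verify $W^*W=WW^*$ on the basis plus density, which spells out the passage to all of $H^2(\mathbb{D}^n)$ that the paper leaves implicit.
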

\begin{proof}[\bf Proof of Theorem \ref{thm-2.4}]
Since $W_{u, v, k_1, k_2, \ldots, k_n}$ is $\mathcal{J}$-symmetric and $v_j(0)=0$ for all $j=1, 2, \ldots, n$, it follows from Theorem \ref{thm-2.2} that  
\begin{align*}
	u(z_1, z_2, \ldots, z_n)=a\prod_{i=1}^{n}z^{k_i}_i\;\mbox{and}\; v_j(z_j)=d_jz_j\; \mbox{for}\; j=1, 2, \ldots, n,
\end{align*}
where $a, d_j(j=1, 2, \ldots, n)\in\mathbb{C}$. Then for any non-negative integers $m_1, m_2, \ldots, m_n$, we have
\begin{align}\label{Eq-2.34}
	||&W_{u, v, k_1, k_2, \ldots, k_n}e_{m_1, m_2, \ldots, m_n}||^2\\=&\sum_{j_1, j_2, \ldots, j_n=0}^{\infty}|\langle W_{u, v, k_1, k_2, \ldots, k_n}e_{m_1, m_2, \ldots, m_n}, e_{j_1, j_2, \ldots, j_n}\rangle|^2\nonumber\\=&\sum_{j_1, j_2, \ldots, j_n=0}^{\infty}\bigg|\bigg\langle u\bigg(\frac{\partial^{m}}{\partial z^{k_1}_1\partial z^{k_2}_2 \ldots \partial z^{k_n}_n}\prod_{i}^{n}z^{m_i}_i\bigg)(v), e_{j_1, j_2, \ldots, j_n}\bigg\rangle\bigg|^2\nonumber\\=&\sum_{j_1, j_2, \ldots, j_n=0}^{\infty}\bigg|\bigg\langle a\prod_{i=1}^{n}z^{k_i}_i\times\bigg(\prod_{i}^{n}\frac{m_i!}{(m_i-k_i)!}(d_iz_i)^{m_i-k_i}\bigg), e_{j_1, j_2, \ldots, j_n}\bigg\rangle\bigg|^2\nonumber\\=&\sum_{j_1, j_2, \ldots, j_n=0}^{\infty}\bigg|\bigg\langle a\prod_{i}^{n}\frac{m_i!d^{m_i-k_i}_i}{(m_i-k_i)!}\times\prod_{i=1}^{n}z^{m_i}_i, e_{j_1, j_2, \ldots, j_n}\bigg\rangle\bigg|^2\nonumber\\=&\sum_{j_1, j_2, \ldots, j_n=0}^{\infty}\bigg| a\prod_{i}^{n}\frac{m_i!d^{m_i-k_i}_i}{(m_i-k_i)!}\bigg|^2\bigg|\bigg\langle e_{m_1, m_2, \ldots, m_n}, e_{j_1, j_2, \ldots, j_n}\bigg\rangle\bigg|^2\nonumber\\=&\bigg| a\prod_{i}^{n}\frac{m_i!d^{m_i-k_i}_i}{(m_i-k_i)!}\bigg|^2\nonumber
\end{align}
 and
\begin{align}\label{Eq-2.35}
	||&W^*_{u, v, k_1, k_2, \ldots, k_n}e_{m_1, m_2, \ldots, m_n}||^2\\=&\sum_{j_1, j_2, \ldots, j_n=0}^{\infty}|\langle W^*_{u, v, k_1, k_2, \ldots, k_n}e_{m_1, m_2, \ldots, m_n}, e_{j_1, j_2, \ldots, j_n}\rangle|^2\nonumber\\=&\sum_{j_1, j_2, \ldots, j_n=0}^{\infty}|\langle e_{m_1, m_2, \ldots, m_n},  W_{u, v, k_1, k_2, \ldots, k_n}e_{j_1, j_2, \ldots, j_n}\rangle|^2\nonumber\\=&\sum_{j_1, j_2, \ldots, j_n=0}^{\infty}\bigg|\bigg\langle e_{m_1, m_2, \ldots, m_n}, u\bigg(\frac{\partial^{m}}{\partial z^{k_1}_1\partial z^{k_2}_2 \ldots \partial z^{k_n}_n}\prod_{i}^{n}z^{j_i}_i\bigg)(v)\bigg\rangle\bigg|^2\nonumber\\=&\sum_{j_1, j_2, \ldots, j_n=0}^{\infty}\bigg|\bigg\langle e_{m_1, m_2, \ldots, m_n}, a\prod_{i=1}^{n}z^{k_i}_i\times\bigg(\prod_{i}^{n}\frac{j_i!}{(j_i-k_i)!}(d_iz_i)^{j_i-k_i}\bigg)\bigg\rangle\bigg|^2\nonumber\\=&\sum_{j_1, j_2, \ldots, j_n=0}^{\infty}\bigg|\bigg\langle e_{m_1, m_2, \ldots, m_n}, a\prod_{i}^{n}\frac{j_i!d^{j_i-k_i}_i}{(j_i-k_i)!}\times\prod_{i=1}^{n}z^{j_i}_i\bigg)\bigg\rangle\bigg|^2\nonumber\\=&\sum_{j_1, j_2, \ldots, j_n=0}^{\infty}\bigg| a\prod_{i}^{n}\frac{j_i!d^{j_i-k_i}_i}{(j_i-k_i)!}\bigg|^2\bigg|\bigg\langle e_{m_1, m_2, \ldots, m_n}, e_{j_1, j_2, \ldots, j_n}\bigg\rangle\bigg|^2\nonumber\\=&\bigg| a\prod_{i}^{n}\frac{m_i!d^{m_i-k_i}_i}{(m_i-k_i)!}\bigg|^2\nonumber
\end{align}
as the set $\{e_{m_1, m_2, \ldots, m_n}:m_1, m_2, \ldots,m_n\ge0\}$ forms an orthonormal basis for $H(\mathbb{D}^n)$. Thus, it follows from \eqref{Eq-2.34} and \eqref{Eq-2.35} that for any non-negative integer $m_1, m_2, \ldots, m_n$, we have 
\begin{align}\label{Eq-2.36}
	||W_{u, v, k_1, k_2, \ldots, k_n}e_{m_1, m_2, \ldots, m_n}||^2=\bigg| a\prod_{i}^{n}\frac{m_i!d^{m_i-k_i}_i}{(m_i-k_i)!}\bigg|^2=||W^*_{u, v, k_1, k_2, \ldots, k_n}e_{m_1, m_2, \ldots, m_n}||^2.
\end{align}
 Consequently,  \eqref{Eq-2.36} leads to the conclusion that 
 \begin{align*}
 	||W_{u, v, k_1, k_2, \ldots, k_n}f||=||W^*_{u, v, k_1, k_2, \ldots, k_n}f||
 \end{align*} for any $f\in H^2(\mathbb{D}^n)$, and hence $W_{u, v, k_1, k_2, \ldots, k_n}$ is a normal operator. This completes the proof.
\end{proof}
From Theorem \ref{thm-2.4}, it is clear that when $k_1=k_2=\ldots=k_n=0$, $W_{u, v, k_1, k_2, \ldots, k_n}$ coincides with $W_{u, v}$. Consequently, we can readily derive the following result from Theorem \ref{thm-2.4}.
\begin{cor}\label{cor-2.3}
		Let $u:\mathbb{D}^n\to \mathbb{C} $ be a nonzero analytic function and, let $v:\mathbb{D}^n\to\mathbb{D}^n $ with $v(z_1, z_2, \ldots, z_n)=(v_1(z_1), v_2(z_2),\ldots, v_n(z_n))$ be an analytic self-map of $\mathbb{D}^n$ such that  $W_{u, v}$ is bounded on $H^2(\mathbb{D}^n)$. Suppose $W_{u, v}$ is $\mathcal{J}$-symmetric and $v_j(0)=0$ for all $j=1, 2, \ldots, n$. Then $W_{u, v}$ is normal.
\end{cor}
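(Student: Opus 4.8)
The plan is to invoke the structural description already obtained for $\mathcal{J}$-symmetric operators and then exploit the hypothesis $v_j(0)=0$ to reduce the symbols to a particularly simple shape. Since $W_{u, v, k_1, k_2, \ldots, k_n}$ is assumed $\mathcal{J}$-symmetric, Theorem \ref{thm-2.2} supplies the explicit forms $u(z)=a\prod_{i=1}^{n}z_i^{k_i}/(1-c_iz_i)^{k_i+1}$ and $v_j(z_j)=((d_j-c_j^2)z_j+c_j)/(1-c_jz_j)$ with $c_j=v_j(0)$. Imposing $v_j(0)=0$ forces $c_j=0$ for every $j$, so the symbols collapse to $u(z)=a\prod_{i=1}^{n}z_i^{k_i}$ and $v_j(z_j)=d_jz_j$, where $a,d_j\in\mathbb{C}$. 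Everything that follows is a computation with these two simple symbols.

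The heart of the argument is the observation that, with these symbols, $W_{u, v, k_1, k_2, \ldots, k_n}$ acts diagonally on the monomial orthonormal basis $\{e_{m_1, m_2, \ldots, m_n}\}$ of $H^2(\mathbb{D}^n)$. Concretely, I would apply the operator to $e_{m_1, m_2, \ldots, m_n}=\prod_i z_i^{m_i}$: each partial derivative $\partial^{k_i}/\partial z_i^{k_i}$ turns $z_i^{m_i}$ into $(m_i!/(m_i-k_i)!)\,z_i^{m_i-k_i}$ (and kills it when $m_i<k_i$), evaluation at $v(z)=(d_1z_1, \ldots, d_nz_n)$ inserts a factor $d_i^{m_i-k_i}$, and multiplication by $u$ restores the factor $\prod_i z_i^{k_i}$. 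The net effect is $W_{u, v, k_1, k_2, \ldots, k_n}e_{m_1, m_2, \ldots, m_n}=\lambda_{m_1, \ldots, m_n}\,e_{m_1, m_2, \ldots, m_n}$ with $\lambda_{m_1, \ldots, m_n}=a\prod_{i=1}^{n}\frac{m_i!\,d_i^{m_i-k_i}}{(m_i-k_i)!}$, interpreted as $0$ whenever some $m_i<k_i$. Thus $W_{u, v, k_1, k_2, \ldots, k_n}$ is a diagonal operator in this basis.

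From diagonality the conclusion is immediate. The adjoint is then the diagonal operator with eigenvalues $\overline{\lambda_{m_1, \ldots, m_n}}$, so both $W^*W$ and $WW^*$ are diagonal with the common entries $|\lambda_{m_1, \ldots, m_n}|^2$; hence they coincide and $W_{u, v, k_1, k_2, \ldots, k_n}$ is normal. Equivalently, one can record that $\|W_{u, v, k_1, k_2, \ldots, k_n}e_{m_1, \ldots, m_n}\|=|\lambda_{m_1, \ldots, m_n}|=\|W^*_{u, v, k_1, k_2, \ldots, k_n}e_{m_1, \ldots, m_n}\|$ on every basis vector and invoke the criterion that a bounded operator $T$ is normal precisely when $\|Tf\|=\|T^*f\|$ for all $f$.

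I do not anticipate a serious conceptual obstacle; the only point requiring care is the bookkeeping in the monomial computation, namely correctly composing the differentiation with the evaluation at $v$ and keeping track of the cases $m_i<k_i$ where the image vanishes. One should also confirm that the diagonal eigenvalues $\lambda_{m_1, \ldots, m_n}$ are genuinely well defined on the dense span of monomials, so that the passage from the basiswise identity to the statement $\|W_{u, v, k_1, k_2, \ldots, k_n}f\|=\|W^*_{u, v, k_1, k_2, \ldots, k_n}f\|$ for all $f\in H^2(\mathbb{D}^n)$ is legitimate; this is precisely where the standing boundedness hypothesis does its work.
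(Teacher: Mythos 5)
Your proposal is correct, and its skeleton is the same as the paper's: use the structure theorem to reduce the symbols to $u(z)=a\prod_{i=1}^{n}z_i^{k_i}$ and $v_j(z_j)=d_jz_j$, and then compute on the monomial orthonormal basis. (You quote Theorem \ref{thm-2.2} and work with general $k_i$, so you in effect prove Theorem \ref{thm-2.4}; the paper's proof of the corollary instead cites Theorem \ref{thm-2.1} and sets $k_i=0$ in \eqref{Eq-2.36}. This is harmless, since the corollary is exactly the case $k_1=\cdots=k_n=0$.) Where you genuinely improve on the paper is the final step. The paper records only the norm identity $\|W e_{m_1,\ldots,m_n}\|=\|W^{*}e_{m_1,\ldots,m_n}\|$ on basis vectors and then asserts $\|Wf\|=\|W^{*}f\|$ for all $f$; for a general bounded operator that inference is invalid --- the nilpotent operator $T$ on $\mathbb{C}^2$ with $Te_1=0$, $Te_2=e_1$ satisfies $\|Tf_j\|=\|T^{*}f_j\|$ on the orthonormal basis $f_{1,2}=(e_1\pm e_2)/\sqrt{2}$, yet $T$ is not normal. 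What makes the step legitimate here is precisely the fact you make explicit: $W$ is \emph{diagonal} on the monomial basis, hence $W^{*}$ is diagonal with conjugated eigenvalues, so $W^{*}W$ and $WW^{*}$ agree on the dense span of the monomials and therefore everywhere by continuity. One caveat on your closing sentence: it is the diagonality (together with Parseval), not density and boundedness by themselves, that justifies passing from the basiswise identity to all of $H^2(\mathbb{D}^n)$; the map $f\mapsto\|Wf\|-\|W^{*}f\|$ is not linear, so its vanishing on a dense set of special vectors would not suffice without the diagonal structure.
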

\begin{proof}[\bf Proof of Corollary \ref{cor-2.3}]
	Since $W_{u, v}$ is $\mathcal{J}$-symmetric and $v_j(0)=0$ for all $j=1, 2, \ldots, n$, it follows from Theorem \ref{thm-2.1} that
	\begin{align*}
		u(z_1, z_2, \ldots, z_n)=a\;\mbox{and}\; v_j(z_j)=d_jz_j\; \mbox{for}\; j=1, 2, \ldots, n,
	\end{align*}
	where $a, d_j(j=1, 2, \ldots, n)\in\mathbb{C}$.\\\vspace{1.2mm}
	 Now, we can proceed with the proof in a manner analogous to the proof of Theorem \ref{thm-2.4}. By setting $k_i=0$ for all $i=1, 2, \ldots, n$ in equation \eqref{Eq-2.36}, we obtain
\begin{align*}
	||W_{u, v}e_{m_1, m_2, \ldots, m_n}||^2=\bigg| a\prod_{i}^{n}d^{m_i}_i\bigg|^2=||W^*_{u, v}e_{m_1, m_2, \ldots, m_n}||^2,
\end{align*}
which leads to the conclusion that $||W_{u, v}f||=||W^*_{u, v}f||$ for any $f\in H^2(\mathbb{D}^n)$, and hence $W_{u, v}$ is a normal operator. This completes the proof.
\end{proof}

\section{Kernel of generalized composition-differentiation operators on Hardy space $H^2(\mathbb{D}^n) $}
In the study of composition operators for several spaces of functions, one of the significant findings is to explore the kernel of the operators. In this section, we investigate the kernel of generalized composition-differentiation operators $W_{u, v, k_1, k_2, \ldots, k_n}$ on the Hardy space $H^2(\mathbb{D}^n)$. The kernel of an operator refers to the collection of vectors that map to the zero vector. To explore the kernel of generalized composition-differentiation operators on $H^2(\mathbb{D}^n)$, we must introduce the space containing all polynomials in $\mathbb{C}^n$ of degree $\leq m$.\vspace{1.2mm} 

Let $\mathcal{P}^n_m(\mathbb{C})$ be the space containing all polynomials in $\mathbb{C}^n$ of degree $\leq m$ over the complex plane $\mathbb{C}$. The explicit definition of this space is as follows:
\begin{align*}
	\mathcal{P}^n_m(\mathbb{C})=\bigg\{\sum_{a_1+a_2+\cdots+a_n\leq m} A_{a_1, a_2, \ldots, a_n} \prod_{i=1}^{n} z_i^{a_i} : A_{a_1, a_2, \ldots, a_n} \in \mathbb{C}\bigg\}.
\end{align*}

 In 2014, Jung \emph{et~al} \cite[Proposition 3.2]{Jun-Kim-Ko-Lee-JFA-2014} have proved that the kernel of the adjoint of a complex symmetric weighted composition operator on $ {H}^2(\mathbb{D}) $ is simply $\{0\}$. In 2021, Han and Wang \cite[Proposition 2.9]{Han-Wang-BJMA-2021} demonstrated that the kernel of the adjoint of a nonzero $ J $-symmetric weighted composition-differentiation operator on the Hardy Hilbert space $ {H}^2(\mathbb{D}) $ is the whole complex plane $ \mathbb{C} $.\vspace{1.2mm}
 
 It is natural to raise the following question.
 \begin{ques}\label{Q-5.1}
 Can we explore the kernel of $W_{u, v, k_1, k_2, \ldots, k_n}$ on $H^2(\mathbb{D}^n)$ explicitly?
 \end{ques}
 
 In this section, our primary aim is to answer the Question \ref{Q-5.1}. It has been studied extensively the kernel of the composition-differentiation operator in different spaces defined over the unit disk $\mathbb{D}$. We will explore the kernel of the composition-differentiation operator $W_{u, v, k_1, k_2, \ldots, k_n}$ on $H^2(\mathbb{D}^n)$ in the Hardy space $H^2(\mathbb{D}^n)$ over the polydisk $\mathbb{D}^n$.\vspace{1.2mm}

By establishing the following result, we will show that both the kernels of the generalized composition-differentiation operators $W_{u, v, k_1, k_2, \ldots, k_n}$ and their adjoint are equal to $\mathcal{P}^n_{m-1}(\mathbb{C})$. Proving the following result is greatly influenced by the role of the Identity theorem in several complex variables.
\begin{thm}\label{thm-5.1}
	Let $u:\mathbb{D}^n\to \mathbb{C} $ be a nonzero analytic function and, let $v:\mathbb{D}^n\to\mathbb{D}^n$ with $v(z_1, z_2, \ldots, z_n)=(v_1(z_1), v_2(z_2),\ldots, v_n(z_n))$ be an analytic self-map of $\mathbb{D}^n$ such that  $W_{u, v, k_1, k_2, \ldots, k_n}$ is bounded on $H^2(\mathbb{D}^n)$. If $W_{u, v, k_1, k_2, \ldots, k_n}$ is complex symmetric with standard conjugation $\mathcal{J}$, then $ker(W_{u, v, k_1, k_2, \ldots, k_n})=ker(W^*_{u, v, k_1, k_2, \ldots, k_n})=\mathcal{P}^n_{m-1}(\mathbb{C})$.	
\end{thm}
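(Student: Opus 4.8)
The plan is to compute $\ker W_{u,v,k_1,\ldots,k_n}$ directly and then deduce the statement for the adjoint from complex symmetry. Since $W_{u,v,k_1,\ldots,k_n}$ is $\mathcal J$-symmetric we have $W_{u,v,k_1,\ldots,k_n}=\mathcal J W^*_{u,v,k_1,\ldots,k_n}\mathcal J$, and applying $\mathcal J$ (with $\mathcal J^2=I$) gives $\ker W^*_{u,v,k_1,\ldots,k_n}=\mathcal J\big(\ker W_{u,v,k_1,\ldots,k_n}\big)$. Because $\mathcal J f(z)=\overline{f(\bar z)}$ merely conjugates the Taylor coefficients of $f$, it maps $\mathcal P^n_{m-1}(\mathbb C)$ onto itself; hence it suffices to prove $\ker W_{u,v,k_1,\ldots,k_n}=\mathcal P^n_{m-1}(\mathbb C)$, and the adjoint statement then follows for free. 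At the outset I would record the explicit forms of $u$ and the $v_j$ furnished by Theorem \ref{thm-2.2}, so that $u$ is nonzero analytic vanishing only on the thin set $\{z:\prod_i z_i^{k_i}=0\}$ and each $v_j$ is a nonconstant M\"obius self-map of $\mathbb D$.

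For the easy inclusion $\mathcal P^n_{m-1}(\mathbb C)\subseteq\ker W_{u,v,k_1,\ldots,k_n}$, write $D=\partial^{m}/\partial z_1^{k_1}\cdots\partial z_n^{k_n}$, so that $W_{u,v,k_1,\ldots,k_n}f=u\cdot\big((Df)\circ v\big)$. If $f$ has total degree at most $m-1$, then every monomial $\prod_i z_i^{a_i}$ occurring in $f$ satisfies $\sum_i a_i\le m-1<m=\sum_i k_i$, so at least one $a_i<k_i$ and $D\big(\prod_i z_i^{a_i}\big)=0$; hence $Df\equiv0$ and $f\in\ker W_{u,v,k_1,\ldots,k_n}$.

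For the reverse inclusion I would invoke the Identity theorem in several variables, as the introductory remark anticipates. If $W_{u,v,k_1,\ldots,k_n}f=0$, then $u\cdot\big((Df)\circ v\big)\equiv0$; since $u$ is analytic and nonvanishing off a nowhere-dense analytic set, the analytic function $(Df)\circ v$ vanishes on a dense open subset of $\mathbb D^n$ and hence identically. Because each $v_j$ is a nonconstant M\"obius map, the image $v(\mathbb D^n)=\prod_j v_j(\mathbb D)$ is a nonempty open subset of $\mathbb D^n$, so $Df$ vanishes on an open set and, by the Identity theorem, $Df\equiv0$ on all of $\mathbb D^n$. It then remains to translate the differential condition $Df=0$ into a statement about the Taylor coefficients of $f$.

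The main obstacle is precisely this last translation. Expanding $f=\sum_a A_a\prod_i z_i^{a_i}$ and applying $D$ term by term shows that $Df=0$ is equivalent to $A_a=0$ for every multi-index $a$ with $a_i\ge k_i$ for all $i$, and the crux of the proof is to reconcile this with the claimed description by $\mathcal P^n_{m-1}(\mathbb C)$. I would carry this out by careful bookkeeping of which monomials survive $D$, paying particular attention to the interplay between the componentwise constraint $a_i\ge k_i$ and the total-degree constraint $\sum_i a_i\le m-1$ that defines $\mathcal P^n_{m-1}(\mathbb C)$; this combinatorial step is the delicate point that separates the polydisk case $n\ge2$ from the classical one-variable setting, and is where I would expect to spend the most effort.
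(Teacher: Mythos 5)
Your reductions are fine: passing from $\ker W^*_{u,v,k_1,\ldots,k_n}$ to $\ker W_{u,v,k_1,\ldots,k_n}$ via $\mathcal J$, the inclusion $\mathcal P^n_{m-1}(\mathbb C)\subseteq\ker W_{u,v,k_1,\ldots,k_n}$, and the identity-theorem step giving $Df\equiv 0$ on $\mathbb D^n$ (granting that each $v_j$ is nonconstant, i.e.\ $d_j\neq 0$; if some $d_j=0$ then $v_j$ is constant and $v(\mathbb D^n)$ is not open). The genuine gap is exactly the step you postpone to ``careful bookkeeping'': it cannot be completed, because the two conditions you are trying to reconcile are not equivalent when $n\geq 2$. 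As you correctly compute, $Df\equiv 0$ means $A_a=0$ for every multi-index $a$ with $a_i\geq k_i$ for \emph{all} $i$, so
\begin{align*}
\ker W_{u,v,k_1,\ldots,k_n}=\Big\{f\in H^2(\mathbb D^n): \hat f(a)=0 \ \text{whenever } a_i\geq k_i \ \text{for all } i\Big\},
\end{align*}
the closed span of the monomials $\prod_i z_i^{a_i}$ having $a_i<k_i$ for \emph{at least one} $i$. For $n\geq 2$ and $m\geq 1$ this strictly contains $\mathcal P^n_{m-1}(\mathbb C)$. Concretely, take $n=2$, $k_1=k_2=1$ (so $m=2$), $u(z_1,z_2)=z_1z_2$ and $v_j(z_j)=d_jz_j$ with $0<|d_j|<1$; this $W_{u,v,1,1}$ is bounded and $\mathcal J$-symmetric by Theorem \ref{thm-2.2}, and $f(z_1,z_2)=z_1^{N}$ satisfies $\partial^2f/\partial z_1\partial z_2\equiv 0$, hence lies in the kernel for every $N$, but has total degree $N>m-1=1$. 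In fact $g(z_1)+h(z_2)\in\ker W_{u,v,1,1}$ for arbitrary $g,h\in H^2(\mathbb D)$, so the kernel is infinite dimensional. The componentwise condition ``some $a_i<k_i$'' coincides with the total-degree condition ``$\sum_i a_i\leq m-1$'' only in the one-variable case.

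So what you flagged as the delicate combinatorial point is not delicate but false: Theorem \ref{thm-5.1} as stated fails for $n\geq 2$, and no bookkeeping will rescue it. You should also know that the paper's own proof breaks down at precisely this spot: it passes from $\frac{\partial^{m}f}{\partial z_1^{k_1}\cdots\partial z_n^{k_n}}\equiv 0$ directly to $f\in\mathcal P^n_{m-1}(\mathbb C)$ with no argument, silently importing the one-variable fact $\ker(d^m/dz^m)=\mathcal P^1_{m-1}(\mathbb C)$ into the polydisk setting. Your structure (adjoint reduction, easy inclusion, identity theorem) is sound and matches the paper's; the correct conclusion it supports is $\ker W_{u,v,k_1,\ldots,k_n}=\ker W^*_{u,v,k_1,\ldots,k_n}=\overline{\operatorname{span}}\{\prod_i z_i^{a_i}: a_i<k_i \ \text{for some } i\}$, and the theorem should be restated with this space in place of $\mathcal P^n_{m-1}(\mathbb C)$ (together with the hypothesis $d_j\neq 0$ for all $j$, or a separate treatment of constant components of $v$).
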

\begin{proof}[\bf Proof of Theorem \ref{thm-5.1}]
	Suppose that $W_{u, v, k_1, k_2, \ldots, k_n}$ is complex symmetric on $H^2(\mathbb{D}^n)$ with the standard conjugation $\mathcal{J}$. Then
	\begin{align}\label{Eq-5.1}
		W_{u, v, k_1, k_2, \ldots, k_n}\mathcal{J}f(z)=\mathcal{J}W^*_{u, v, k_1, k_2, \ldots, k_n}f(z),\; f\in\ H^2(\mathbb{D}^n).
	\end{align}
 If $f\in\ ker(W_{u, v, k_1, k_2, \ldots, k_n})$, then 
	\begin{align*}
		W_{u, v, k_1, k_2, \ldots, k_n}(f(z))\equiv 0\; \mbox{on}\; \mathbb{D}^n.
	\end{align*}
	That is,
	\begin{align*}
		 u(z)\frac{\partial^{m}f}{\partial z^{k_1}_1\partial z^{k_2}_2 \ldots \partial z^{k_n}_n}(v(z))\equiv 0\; \mbox{on}\; \mathbb{D}^n,
	\end{align*}
	equivalently, we have
	\begin{align*}
	\frac{\partial^{m}f}{\partial z^{k_1}_1\partial z^{k_2}_2 \ldots \partial z^{k_n}_n}(v(z))\equiv 0\; \mbox{on}\; \mathbb{D}^n.
	\end{align*}
	Since $v(\mathbb{D}^n)$ is open, so by Identity Theorem in several variables, we have 
	\begin{align*}
	\frac{\partial^{m}}{\partial z^{k_1}_1\partial z^{k_2}_2 \ldots \partial z^{k_n}_n}(f(z))\equiv 0\; \mbox{on}\; \mathbb{D}^n
	\end{align*}
which implies that
\begin{align*}
	f(z)=\sum_{a_1+a_2+\cdots+a_n\leq (m-1)}A_{a_1, a_2, \ldots a_n}\prod_{i=1}^{n}z^{a_i}_i
\end{align*}
	for some $A_{a_1, a_2, \dots a_n}(a_1+a_2+\cdots+a_n\leq (m-1))\in\mathbb{C} $. Clearly,
	\begin{align}\label{Eq-5.2}
		ker(W_{u, v, k_1, k_2, \ldots, k_n})\subseteq\mathcal{P}^n_{m-1}(\mathbb{C}).
	\end{align}
Let $ f $ be an element of $ \mathcal{P}^n_{m-1}(\mathbb{C}) $. It is clear that $ f $ is a polynomial of degree $\leq(m-1)$ in $n$ variables. Thus, by applying the Identity Theorem in several variables, we have
	\begin{align*}
	W_{u, v, k_1, k_2, \ldots, k_n}(f(z))&=u(z)\frac{\partial^{m}f}{\partial z^{k_1}_1\partial z^{k_2}_2 \ldots \partial z^{k_n}_n}(v(z))\\&\equiv u(z)\frac{\partial^{m}}{\partial z^{k_1}_1\partial z^{k_2}_2 \ldots \partial z^{k_n}_n}(f(z))\; \mbox{on}\; \mathbb{D}^n\\&\equiv 0\; \mbox{on}\; \mathbb{D}^n
	\end{align*}
	which turns out that $f\in\ ker(W_{u, v, k_1, k_2, \ldots, k_n})$. Thus, we have
	\begin{align}\label{Eq-5.3}
		\mathcal{P}^n_{m-1}(\mathbb{C})\subseteq\ker(W_{u, v, k_1, k_2, \ldots, k_n}).
	\end{align}
	Hence, based on \eqref{Eq-5.2} and \eqref{Eq-5.3}, we conclude that $ ker(W_{u, v, k_1, k_2, \ldots, k_n})=\mathcal{P}^n_{m-1}(\mathbb{C})$. Furthermore, for $f\in\ ker(W^*_{u, v, k_1, k_2, \ldots, k_n})$, we have $	W^*_{u, v, k_1, k_2, \ldots, k_n}(f)=0 $. Since $ D_{n,\psi, \phi} $ is complex symmetric on $H^2(\mathbb{D}^n)$ with conjugation $\mathcal{J}$, we have from \eqref{Eq-5.1} that
	\begin{align*}
		&W_{u, v, k_1, k_2, \ldots, k_n}\mathcal{J}f=\mathcal{J}W^*_{u, v, k_1, k_2, \ldots, k_n}f=0.
	\end{align*}
	That is,
	\begin{align*}
		\mathcal{J}f\in\ker(W_{u, v, k_1, k_2, \ldots, k_n})=\mathcal{P}^n_{m-1}(\mathbb{C}),
	\end{align*}
	equivalently, we have
	\begin{align*}
    f\in\mathcal{P}^n_{m-1}(\mathbb{C}).	
	\end{align*}
	Hence,
	\begin{align}\label{Eq-5.4}
		ker(W^*_{u, v, k_1, k_2, \ldots, k_n})\subseteq\mathcal{P}^n_{m-1}(\mathbb{C}). 	
	\end{align} 
	Let $g\in\mathcal{P}^n_{m-1}(\mathbb{C})$. Then, we have $W^*_{u, v, k_1, k_2, \ldots, k_n}g=\mathcal{J}W_{u, v, k_1, k_2, \ldots, k_n}\mathcal{J}g=0 $ which implies that $g\in\ker(W^*_{u, v, k_1, k_2, \ldots, k_n})$. Hence,
	\begin{align}\label{Eq-5.5}
		\mathcal{P}^n_{m-1}(\mathbb{C})\subseteq ker(W^*_{u, v, k_1, k_2, \ldots, k_n}). 	
	\end{align}
	Consequently, from \eqref{Eq-5.4} and \eqref{Eq-5.5}, we obtain $ ker(W^*_{u, v, k_1, k_2, \ldots, k_n})=\mathcal{P}^n_{m-1}(\mathbb{C}) $. Therefore,
	\begin{align*}
		ker(W_{u, v, k_1, k_2, \ldots, k_n})=ker(W^*_{u, v, k_1, k_2, \ldots, k_n})=\mathcal{P}^n_{m-1}(\mathbb{C}).	
	\end{align*}
	This completes the proof.
\end{proof}
\section{Concluding remarks}
In the article, we have discussed the $\mathcal{J}$-symmetric exhibited by the partial composition-differentiation operator $W_{u, v, k_1, k_2, \ldots, k_n}$. Our research has involved investigating normality and Hermitian characteristics of these operators. Additionally, we have determined the kernels of the operators $W_{u, v, k_1, k_2, \ldots, k_n}$and their adjoints. furthermore, we have observed that the discussion on normality can be extended to encompass scenarios where $v_j(0)\neq0(j=1, 2, \ldots, n)$. Moreover, the normality and symmetry of these operators can be discussed independently using different conjugation methods. 
%\noindent{\bf Acknowledgment:} The authors are greatly indebted to the anonymous referee for his/her elaborate comments and valuable suggestions which improve significantly the presentation of the paper. The second author is supported by UGC-JRF (NTA Ref. No.: $ 201610135853 $), New Delhi, India. 

\section{Declaration}
\noindent\textbf{Compliance of Ethical Standards}\\

\noindent\textbf{Conflict of interest} The authors declare that there is no conflict  of interest regarding the publication of this paper.\vspace{1.5mm}

\noindent\textbf{Data availability statement}  Data sharing not applicable to this article as no datasets were generated or analysed during the current study.


\begin{thebibliography}{200}
	
%\bibitem{Abramovich-Aliprantis-AMS-2002} {\sc Y. A. Abramovich} and {\sc C. D.  Aliprantis}, An Invitation to Operator Theory, {\it Amer. Math. Soc.} {\bf 2002}, iv+530 (2002).

%\bibitem{Aldroubi-PAMS-1995} {\sc A. Aldroubi}, Portraits of frames,
%{\it Proc. Amer. Math. Soc.} {\bf 123}(1995), 1661–1668.

%\bibitem{Aleman-Cima-JAM-2001} { \sc A. Aleman} and {\sc J. A. Cima}, An integral operator on $ H^p $ and Hardy’s inequality, {\it J. Anal. Math.} {\bf 85}(2001), 157–176.

%\bibitem{Aleman-Siskakis-CVTA-1995} {\sc A. Aleman} and {\sc A. G. Siskakis}, An integral operator on $ H^p $, {\it Complex Variables Theory Appl.} {\bf 28}(1995), 149–158.

\bibitem{Allu-Hal-Pal-2023} {\sc V. Allu, H. Haldar}, and {\sc S. Pal}, Composition-differentiation operator on weighted Bergman spaces,  (2023) arXiv:2301.08575v1.

%\bibitem{Arazy-Fisher-Peetre-JRAM-1985} {\sc J. Arazy}, {\sc S. D. Fisher}, and {\sc J. Peetre}, M$\ddot{o}$bius invariant function spaces, {\it J. Reine Angew. Math.} {\bf 363}(1985), 110–145.

%\bibitem{Basor-Restsek-JMAA-2006} {\sc E. L. Basor} and {\sc D. Q. Retsek}, Extremal non-compactness of composition operators with linear fractional symbol, {\it J. Math. Anal. Appl.} {\bf 322} (2006), 749–763.


%\bibitem{Bayart-Matheron-2009} {\sc F. Bayart} and {\sc E. Matheron}, Dynamics of Linear Operators, Cambridge Tracts in Math. {\bf 179}. Cambridge Univ. Press, Cambridge (2009).

%\bibitem{citekey} {\sc J. Bonet}, {\sc T. Mengestie}, and {\sc W. Seyoum}, Mean ergodic composition operators on generalized Fock spaces, {RACSAM}, {\bf 114}(6) (2020).https://doi.org/10.1007/s13398-019-00738-w 

%\bibitem{Bonet-Vukot\`ic-MM-2013} {\sc J. Bonet} and {\sc D. Vukoti\`c}, Superposition operators between weighted Banach spaces of analytic functions of controlled growth, {\it Monatsh. Math.} {\bf 170} (2013), 311–323. 

\bibitem{Bourdon-2002} {\sc P. S. Bourdon}, Some adjoint formulas for composition operators on $ H^2 $, unpublished manuscript, 2002.

\bibitem{Bourdon-cima-matheson-TAMS-1999} {\sc P. S. Bourdon}, {\sc J. A. Cima}, and {\sc A. L. Matheson}, Compact composition operators on BMOA, {\it Trans. Amer. Math. Soc.} {\bf 351}(1999), 2183–2196.

\bibitem{Bourdon-Fry-Hammond-Spofford-TAMS-2004} {\sc P. S. Bourdon}, {\sc E. E. Fry}, {\sc C. Hammond}, and {\sc C. H. Spofford}, Norms of linear-fractional composition operators, {\it Trans. Amer. Math. Soc.} {\bf 356}(2004), 2459–2480.

\bibitem{Bourdon-Levi-Narayan-Shapiro-JMAA-2003} {\sc P. S. Bourdon}, {\sc D. Levi}, {\sc S. K. Narayan}, and {\sc J. H. Shapiro}, Which linear-fractional composition operators are essentially normal? {\it J. Math. Anal. Appl.} {\bf 280}(2003), 30-53.

\bibitem{Bourdon-MacCluer-CVEE-2007} {\sc P. S. Bourdon} and {\sc  B. D. MacCluer}, Selfcommutators of automorphic composition operators, {\it  Complex Var. Elliptic Equ.} {\bf 52}(2007), 85-104.

\bibitem{Bourdon-Narayan-JMAA-2010} {\sc P. S. Bourdon} and {\sc S. K. Narayan}, Normal weighted composition operators on the Hardy space $H^2(\mathbb{U})$, {\it J. Math. Anal. Appl.} {\bf 367} (2010), 278–286.

\bibitem{Bourdon-Noor-JMAA-2015} {\sc P. Bourdon} and {\sc S. Noor}, Complex symmetry of invertible composition operators, \textit{J. Math. Anal. Appl.} \textbf{429}(2015), 105–110.

%\bibitem{Bourdon-Shapiro-JFA-2008} {\sc P. S. Bourdon} and {\sc J. H. Shapiro}, Adjoints of rationally induced composition operators, {\it J. Funct. Anal.} {\bf 255} (2008), 1995-2012.

%\bibitem{Carrol-Gilmore-JMAA-2021} {\sc T. Carroll} and {\sc C. Gilmore}, Weighted composition operators on Fock Spaces and their dynamics, {\it J. Math. Anal. Appl.} {\bf 502} (2021), 125234.

%\bibitem{Charpentier-CAOT-2013} {\sc S. Charpentier}, Composition operators on weighted Bergman-Orlicz spaces on the ball, {\it Complex Anal. Oper. Theory}, {\bf 7} (1) (2013), 43–68.

%\bibitem{Cima-Matheson-TAMS-1994} {\sc J. A. Cima} and {\sc A. L. Matheson}, Completely continuous composition operators, {\it Trans. Amer. Math. Soc.} {\bf 344}(2) (1994), 849–856.

%\bibitem{Cima-Matheson-RMJM-2002} {\sc J. A. Cima} and {\sc A. L. Matheson}, Weakly compact composition operators on VMO, {\it Rocky Mountain J. Math.} {\bf 32} (2002), 937–951.

\bibitem{Clifford-Zheng-IUMJ-1999} {\sc J. Clifford} and {\sc D. Zheng}, Composition operators on the Hardy space, {\it Indiana Univ. Math. J.} {\bf 48}(1999), 1585-1616.

%\bibitem{Clifford-Dabkowski-JMAA-2005} {\sc J. H. Clifford} and {\sc M. G. Dabkowski}, Singular values and Schmidt pairs of composition operators on the Hardy space, {\it J. Math. Anal. Appl.} {\bf 305}(2005), 183–196.

%\bibitem{Constantin-JMAA_2010} {\sc O. Constantin}, Carleson embeddings and some classes of operators on weighted Bergman spaces, {\it J. Math. Anal. Appl.} {\bf 365} (2010), 668–682.

%\bibitem{Constantin-PAMS-2012} {\sc O. Constantin}, Volterra type integration operators on Fock spaces, {\it Proc. Amer. Math. Soc.} {\bf 140}(12) (2012), 4247–4257.


%\bibitem{Constantin-Pel\`aez-2015} {\sc O. Constantin} and {\sc J. Peláez}, Integral operators, embedding theorems and a Littlewood–Paley formula on weighted Fock spaces, {\it  J. Geom. Anal.} {\bf 26(2)} (2015), 1109–1154. 

%\bibitem{Contreras-JOT-2004} {\sc M. Contreras} and {\sc A. Hern\`andez-D\`iaz}, Weighted composition operators on spaces of functions with derivative in a Hardy space, {\it J. Operator Theory}, {\bf 52} (2004), 173-184.

%\bibitem{Conway-SVNY-1990} {\sc J. Conway}, A Course in Functional Analysis, {\it second ed., Springer-Verlag, New York}, 1990.

%\bibitem{Cowen-TAMS-1978} {\sc C. C. Cowen}, The commutant of an analytic Toeplitz operator, {\it Trans. Amer. Math. Soc.} {\bf 239} (1978), 1–31.

%\bibitem{Cowen-JOT-1983} {\sc C. C. Cowen }, Composition Operators on $ H^2$, {\it J. Operator Theory}, {\bf 9} (1983),77-106.

%\bibitem{Cowen-IEOT-1988} {\sc C. C. Cowen}, Linear fractional composition operators on $ H^2 $, {\it Integral Equations Operator Theory}, {\bf 11} (1988), 151-160.

\bibitem{Cowen-Gallardo-JFA-2006} {\sc C. C. Cowen} and {\sc E. A. Gallardo-Guti\`errez}, A new class of operators and a description of adjoints of composition operators, {\it J. Funct. Anal.} {\bf 238}(2006), 447-462.

\bibitem{Cowen-MacCluer-CRCP-1995} {\sc C. C. Cowen} and {\sc B. D. MacCluer}, Composition Operators on Spaces of Analytic Functions, {\it CRC Press, Boca Raton}, (1995).

\bibitem{Cowen-MacCluer} {\sc C. C. Cowen} and {\sc B. D. MacCluer}, Some problems on composition operators, in: F. Jafari, B.D. MacCluer, C. Cowen, D. Porter (Eds.), Studies of Composition Operators, Laramie, WY, 1996, in: Contemp. Math., vol. 213, Amer. Math. Soc., Providence, RI, 1998, pp. 17–25.

%\bibitem{Davis-CMB-1971} {\sc C. Davis},The Toeplitz-Hausdorff theorem explained, {\it Canad. Math. Bull. }{\bf 14} (1971),245-246. 

%\bibitem{Dean-Johnson-Reed-Shapiro-JMAA-2006} {\sc S. Effinger-Dean}, {\sc A. Johnson}, {\sc J. Reed}, and {\sc J. Shapiro}, Norms of composition operators with rational symbol, {\it J. Math. Anal. Appl.} {\bf 324} (2006), 1062-1072.

%\bibitem{DeBarra-Giles-Sims-JLMS-1972} {\sc G. DeBarra}, {\sc J. L. Giles}, and {\sc B. Sims}, On the numerical range of compact operators, {\it J. London Math. Soc.} {\bf 5} (1972), 704–706.

% \bibitem{Dekker-FEOM-1969} {\sc N. P. Dekker}, Joint Numerical Range and Joint Spectrum of Hilbert Space Operators, {\it thesis,Free University of Amsterdam}, 1969.

\bibitem{Duren-APNY-1970} {\sc P. L. Duren}, Theory of $ H^p $ Spaces, {\it Academic Press, New York}, 1970.

%\bibitem{Duren-Schuster-AMS-2004} {\sc P. L. Duren} and {\sc A. P. Schuster}, Bergman Spaces, {\it Amer. Math. Soc.}, Math. Surveys Monogr., {\bf 100} (2004), Providence, RI.

%\bibitem{Embry-PJM-1970} {\sc M. R. Embry}, The numerical range of an operator, {\it Pacific J. Math.} {\bf 3} (1970), 647–650.

%\bibitem{Erdmann-Manguillot-2011} {\sc K. G. Grosse-Erdmann} and {\sc  A. P. Manguillot}, Linear Chaos, {\it Springer, New York} (2011). 

%\bibitem{Fatehi-Hammond-PAMS-2020} {\sc M. Fatehi} and {\sc C. Hammond}, Composition-differentiation operators on the Hardy space, \textit{Proc. Amer. Math. Soc.} \textbf{148}, 2893–2900 (2020).


\bibitem{Fatehi-Hammond-CAOT-2021} {\sc M. Fatehi} and {\sc C. Hammond}, Normality and self-adjointness of weighted composition-differentiation operators, \textit{Complex Anal. Oper. Theory} \textbf{15}(1)(2021), 13.

\bibitem{Fatehi-Moradi-2021} {\sc M. Fatehi} and {\sc M. Moradi}, Complex symmetric Weighted Composition-Differentiation Operators of order $n$ on the Weighted Bergman Spaces, 
https://doi.org/10.48550/arXiv.2101.04911.


%\bibitem{Fleming-Jamison-2003} {\sc R. Fleming} and {\sc J. Jamison}, Isometries on Banach Spaces, Function Spaces, Monographs and Surveys in Pure and Applied Mathematics, {\bf 129}, Chapman and Hall/CRC, Boca Raton (2003).

%\bibitem{Forelli-CJM-1964} {\sc F. Forelli}, The isometries of $ H^p $, {\it Can. J. Math.} {\bf 16} (1964), 721–728.

\bibitem{Garcia-Hammond-OPAP-2013} {\sc S. Garcia} and {\sc C. Hammond}, Which weighted composition operators are complex symmetric? \textit{Oper. Theory Adv. Appl.} \textbf{236}(2013), 171–179.

\bibitem{Garcia-Putinar-TAMS-2006} {\sc S. Garcia} and {\sc M. Putinar}, Complex symmetric operators and applications, {\it Trans. Amer. Math. Soc.} {\bf 358}, (2006), 1285–1315.

\bibitem{Garcia-Putinar-TAMS-2007} {\sc S. Garcia} and {\sc M. Putinar}, Complex symmetric operators and applications II, {\it Trans. Amer. Math. Soc.} {\bf 359}(2007), 3913–3931.

\bibitem{Garcia-Putinar-TAMS-2008} {\sc S. Garcia} and {\sc M. Putinar}, Interpolation and complex symmetry, {\it Tohoku Math. J.} {\bf 60}(2008), 423–440.

\bibitem{Garcia-Wogen-JFA-2009} {\sc S. Garcia} and {\sc W. Wogen}, Complex symmetric partial isometries, {\it J. Funct. Anal.} {\bf 257}(2009), 1251–1260.

\bibitem{Garcia-Wogen-TAMS-2010} {\sc S. Garcia} and {\sc W. Wogen}, Some new classes of complex symmetric operators, {\it Trans. Amer. Math. Soc.} {\bf 362}(2010), 6065–6077. 

\bibitem{Garnett-APNY-1981} {\sc J. B. Garnett}, Bounded Analytic Functions, {\it Academic Press, New York}, (1981).

%\bibitem{Girela-2001} {\sc D. Girela}, Analytic functions of bounded mean oscillation, in: Complex Function Spaces, Mekrijärvi, 1999, in: Univ. Joensuu Dept. Math. Rep. Ser., {\bf 4}, Univ. Joensuu, Joensuu, (2001), 61–170.

%\bibitem{Gallardo-Montes-MA-2003} {\sc E. Gallardo-Guit\`errez} and {\sc A. Montes-Rodr\`iguez}, Adjoints of linear fractional composition operators on the Dirichlet space, {\it Math. Ann.} {\bf 327} (2003), 117-134.

%\bibitem{Gunatillake-PAMS-2008} {\sc G. Gunatillake}, Compact weighted composition operators on the Hardy space, {\it Proc. Amer. Math. Soc.} {\bf 136} (2008), 2895-2899.

%\bibitem{Gurbuz-CMB-2017} {\sc F. G$\ddot{u}$rb$\ddot{u}$z}, Some estimates for generalized commutators of rough fractional maximal and integral operators on generalized weighted Morrey spaces, {\it Canad. Math. Bull.} {\bf 60} (1) (2017), 131–145.

%\bibitem{gurbuz-AM-2018} {\sc F. G$\ddot{u}$rb$\ddot{u}$z}, Multi-sublinear operators generated by multilinear fractional integral operators and local Campanato space estimates for commutators on the product generalized local Morrey spaces, {\it Adv. Math.} {\bf 47} (6) (2018), 855–880.

%\bibitem{Gurbuz-IJAA-2019} {\sc F. G$\ddot{u}$rb$\ddot{u}$z}, Multilinear BMO estimates for the commutators of fractional maximal and integral operators on the product generalized Morrey spaces, {\it Int. J. Anal. Appl.} {\bf 17} (4) (2019), 596–619.

%\bibitem{Gurbuz-F-2021} {\sc F. G$\ddot{u}$rb$\ddot{u}$z}, Product generalized local Morrey spaces and commutators of multi-sublinear operators generated by multilinear
%Calder$\acute{o}$n-Zygmund operators and local Campanato functions, {\it Filomat}, {\bf 35} (9)(2021), 2849–2868.

%\bibitem{Gustafson-Rao-SNY} {\sc K. E. Gustafson} and {\sc D. K. M. Rao}, Numerical Range, {\it Springer, New York}, (1997).

\bibitem{Halmos-DNPNJ-1967} {\sc P. R. Halmos}, A Hilbert Space Problem Book, {\it D. van Nostrand, Princeton, NJ}, (1967).

\bibitem{Halmos-SVNY-1982}{\sc P. R. Halmos}, A Hilbert Space Problem Book, {\it 2nd ed., Springer-Verlag, New York}, (1982).

%\bibitem{Hammond-ASM-2003} {\sc C. Hammond}, On the norm of a composition operator with linear fractional symbol, {\it Acta. Sci. Math. (Szeged)} {\bf 69} (2003), 813-819.

%\bibitem{Hammond-Moorhouse-Robbins-JMAA-2008} {\sc C. Hammond}, {\sc J. Moohouse}, and {\sc M. Robbins}, Adjoints of composition operators with ration symbol, {\it J. Math. Anal. Appl.} {\bf 341} (2008), 626-639.

\bibitem{Han-Wang-JMAA-2019} {\sc K. K. Han} and {\sc M. Wang}, Complex symmetric weighted composition operators in several variables, {\it J. Math. Anal. Appl} {\bf 474} (2019), 961-987.

\bibitem{Han-Wang-BJMA-2021} {\sc K. K. Han} and {\sc M. Wang}, Weighted composition–differentiation operators on the Hardy space, \textit{Banach J. Math. Anal.} (2021) 15:44.


%\bibitem{Han-Wang-CAOT-2021} {\sc K. K. Han} and {\sc M. Wang}, Weighted Composition-differentiation Operators on the Bergman Space, \textit{Complex Anal. Oper. Theory}, (2021) 15:89.


%\bibitem{Han-Wang-BJMA-2022} {\sc K. K. Han} and {\sc M. Wang}, Some properties of composition–differentiation operators, \textit{Banach J. Math. Anal.} 16, 36 (2022).

\bibitem{Han-Wang-Wu-AMS-2023} {\sc K. K. Han}, {\sc M. Wang}, and {\sc Q. Wu}, Complex Symmetry of Toeplitz Operators over the Bidisk, {\it Acta Mathematica Scientia} {\bf 43B}(4)(2023), 1537-1546.

%\bibitem{Hausdorff-MZ-1919} {\sc F. Hausdorff}, Der Vervorrat einer Bilinearform, {\it Math. Z.} {\bf 3} (1919), 314-316.

%\bibitem{Hedenmalm-Korenblum-Zhu-GTMS-2000} {\sc H. Hedenmalm}, {\sc B. Korenblum}, and {\sc K. Zhu}, Theory of Bergman Spaces, {\it Grad. Texts in Math., Springer, New York}, {\bf 199} (2000).

%\bibitem{Hibschweiler-Portnoy-RMJM-2005} {\sc R. A. Hibschweiler} and {\sc N. Portnoy}, Composition followed by differentiation between Bergman and Hardy spaces, {\it Rocky Mt. J. Math.} {\bf 35} (3) (2005), 843–855.

%\bibitem{Higdon-JFA-2005} {\sc W. M. Higdon}, The spectra of composition operators from linear fractional maps acting upon the Dirichlet space, {\it J. Funct. Anal.} {\bf 220} (1) (2005), 55–75.

%\bibitem{Hilden-Wallen-IUMJ-1974} {\sc H.M. Hilden} and {\sc L. J. Wallen}, Some cyclic and non-cyclic vectors of certain operators, {\it Indiana Univ. Math. J.} {\bf 23} (1974), 557–565.

%\bibitem{Hoffman-1988} {\sc K. Hoffman}, Banach Spaces of Analytic Functions, Dover Publications Inc, Mineola (1988).

%\bibitem{Huang-Jiang-Xue-AM-2022} {\sc C. S. Huang}, {\sc Z. J. Jiang} and {\sc Y. F. Xue}, Sum of some product-type operators from mixed-norm spaces to weighted-type spaces on the unit ball, {\it AIMS Math.} {\bf 7}(10)(2022), 18194–18217.

%\bibitem{Hurst-AM-1997} {\sc P. Hurst}, Relating composition operators on different weighted Hardy spaces, {\it Arch. Math. (Basel)} {\bf 68}(1997), 503-513.
\bibitem{Hu-Yang-Zhou-IJM-2020} {\sc X. H. Hu}, {\sc Z. C. Yang}, and {\sc Z. H. Zhou}, Complex symmetric weighted composition operators on Dirichlet spaces and Hardy spaces in the unit ball, {\it Inter. J. Math.} {\bf 31} (2020) 2050006.

%\bibitem{Hyvarinen-Kemppainn-Lindstrom-Rautio-Saukko-IEOT-2012} {\sc O. Hyv$\ddot{a}$rinen}, {\sc M. Kemppainn}, {\sc M. Lindstr$\ddot{o}$m}, {\sc A. Rautio}, and {\sc E. Saukko}, The essential norms of weighted composition operators on weighted Banach spaces of analytic functions, {\it Integral Equations Operator Theory}, {\bf 72}(2012), 151–157.

%\bibitem{Hyvarinen-Lindstrom-JMAA-2012} {\sc  O. Hyv$\ddot{a}$rinen} and {\sc M. Lindstr$\ddot{o}$m}, Estimates of essential norms of weighted composition operators between Bloch type spaces,{\it J. Math. Anal. Appl.} {\bf 393} (2012), 38–44.

%\bibitem{Jiang-TJM-2011} {\sc Z. J. Jiang}, On a class of operators from weighted Bergman spaces to some spaces of analytic functions, {\it Taiwan. J. Math.} {\bf 15} (5) (2011), 2095–2121.

%\bibitem{Jiang-AMC-2015} {\sc Z. J. Jiang}, On a product-type operator from weighted Bergman-Orlicz space to some weighted type spaces, {\it Appl. Math. Comput.} {\bf 256} (2015), 37–51.

%\bibitem{Jiang-AMC-2016} {\sc Z. J. Jiang}, Generalized product-type operators from weighted Bergman-Orlicz spaces to Bloch-Orlicz spaces, {\it Appl. Math. Comput.} {\bf 273} (2016), 89–97.

%\bibitem{Jiang-MJM-2016} {\sc Z. J. Jiang}, Product-type operators from Logarithmic Bergman-type spaces to Zygmund-Orlicz spaces, {\it Mediterr. J. Math.} {\bf 13} (2016), 4639–4659.

%\bibitem{Jiang-CVEE-2017} {\sc Z. J. Jiang}, Product-type operators from Zygmund spaces to Bloch-Orlicz spaces, {\it Complex Var. Elliptic Equ.} {\bf 62} (2017), 1645–1664.

%\bibitem{Jiang-OM-2018} {\sc Z. J. Jiang} and {\sc X. F. Wang}, Products of radial derivative and weighted composition operators from weighted Bergman-Orlicz spaces to weighted-type spaces, {\sc Oper. Matrices}, {\bf 12} (2) (2018), 301–319.

%\bibitem{Johnson-AMM-2002} {\sc W. Johnson}, The curious history of Fa$\grave{a}$ di Bruno’s formula, {\it Amer. Math. Mon.} {\bf 109} (3) (2002), 217–234.

\bibitem{Jun-Kim-Ko-Lee-JFA-2014} {\sc S. Jung, Y. Kim, E. Ko}, and {\sc J. E. Lee}, Complex symmetric weighted composition operators on ${H}^2(\mathbb{D})$, \textit{J. Func. Anal.} \textbf{267}(2014) 323–351.

%\bibitem{kubrusly-BNY-2012} {\sc C. S. Kubrusly}, Spectral Theory of Operators on Hilbert spaces, {\it Birkah$\ddot{a}$user, New York} (2012).

%\bibitem{Kriete-MacCluer-Moorhouse-JOT-2007} {\sc T. Kriete}, {\sc B. MacCluer}, and {\sc J. Moorhouse}, Toeplitz-composition $ C^* $-algebras, {\it J. Oper. Theory}, {\bf 58} (2007), 135-156.

%\bibitem{Laitila-IEOT-2007} {\sc J. Laitila}, Composition operators and vector-valued BMOA, {\it Integral Equations Operator Theory}, {\bf 58} (2007), 487–502.

%\bibitem{Laitila-CMFT-2009} {\sc J. Laitila}, Weighted composition operators on BMOA, {\it Comput. Methods Funct. Theory}, {\bf 9} (2009), 27–46.

%\bibitem{Laitila-MN-2010} {\sc J. Laitila}, Isometric composition operators on BMOA, {\it Math. Nachr.} {\bf 283} (2010), 1646–1653.

%\bibitem{Laitila-Nieminen-saksman-Tylli-CAOT_2013} {\sc J. Laitila}, {\sc P. N. Nieminen}, {\sc E. Saksman}, and {\sc H.-O. Tylli}, Compact and weakly compact composition operators on BMOA, {\it Complex Anal. Oper. Theory}, {\bf 7} (2013), 163–181.

\bibitem{Le-JMAA-2012} {\sc T. Le}, Self-adjoint, unitary, and normal weighted composition operators in several variables, {\it J. Math. Anal. Appl} {\bf 395} (2012), 596-607.

%\bibitem{Le-BLMS-2014} {\sc T. Le}, Normal and isometric weighted composition operators on the Fock space, {\it Bull. Lond. Math. Soc.} {\bf 46} (2014), 847–856.

%\bibitem{Li-Liu-Lou-SCM-2014} {\sc P. Li}, {\sc J. Liu} and {\sc Z. Lou}, Integral operators on analytic Morrey spaces, {\it Sci. China Math.} {\bf 57} (9) (2014), 1961–1974.

%\bibitem{Li-Stevic-JCAA-2007} {\sc S. Li} and {\sc S. Stevi$\acute{c}$}, Composition followed by differentiation between Bloch type spaces, {\it J. Comput. Anal. Appl.} {\bf 9} (2) (2007), 195–206.

%\bibitem{Li-JMAA-2008} {\sc S. Li} and {\sc S. Stevi$\acute{c}$}, Products of Volterra type operator and composition operator from $ H^\infty $ and Bloch spaces to the Zygmund space, {\it J. Math. Anal. Appl.} {\bf 345} (2008), 40–52.

%\bibitem{Li-Stevic-HJM-2009} {\sc S. Li} and {\sc S. Stevi$\acute{c}$}, Composition followed by differentiation between $ H^\infty $ and $ \alpha $-Bloch spaces, {\it Houst. J. Math.} {\bf 35} (1) (2009), 327–340.

%\bibitem{Li-Stevic-AMC-2010} {\sc S. Li} and {\sc S. Stevi$\acute{c}$}, Products of composition and differentiation operators from Zygmund spaces to Bloch spaces and Bers spaces, {\it Appl. Math. Comput.} {\bf 217} (2010), 3144–3154.

%\bibitem{Li-Stevic-ASUOC-2016} {\sc S. Li} and {\sc S. Stevi$\acute{c}$}, Weighted differentiation composition operators from the logarithmic Bloch space to the weighted-type space, {\it An. Ştiinţ. Univ. ‘Ovidius’ Constanţa}, {\bf 24} (3) (2016), 223–240.

\bibitem{Lim-Khoi-JMAA-2018} {\sc R. Lim} and {\sc L. Khoi}, Complex symmetric weighted composition operators on $ \mathcal{H}_{\gamma}(\mathbb{D}) $, \textit{J. Math. Anal. Appl.} \textbf{464}(2018), 101-118.

%\bibitem{Littlewood-PLMS-1925} {\sc J. E. Littlewood}, On inequalities in the theory of functions, {\it Proc. London Math. Soc.} {\bf 23}(1925), 481-519.

%\bibitem{Liu-Liu-Yu-CVEE-2017} {\sc Y. M. Liu}, {\sc X. M. Liu}, and {\sc Y. Y. Yu}, On an extension of Stevi$\acute{c}$-Sharma operator from the mixed-norm space to weighted-type spaces, {\it Complex Var. Elliptic Equ.} {\bf 62}(5)(2017), 670–694.

%\bibitem{Liu-Lou-NATMA-2015} {\sc J. Liu} and {\sc Z. Lou}, Carleson measure for analytic Morrey spaces, {\it Nonlinear Anal., Theory Methods Appl.} {\bf 125}(2015), 423–432.

\bibitem{Liu-Ponnusamy-Xie-LMA-2023} {\sc J. Liu}, {\sc S. Ponnusamy}, and {\sc H. Xie}, Complex symmetric weighted composition-differentiation operators, {\it Linear and Multilinear Algebra}, {\bf 71}(2023), 737-755.

%\bibitem{liu-Yu-JMAA-2015} {\sc Y. M. Liu} and {\sc Y. Y. Yu}, Products of composition, multiplication and radial derivative operators from logarithmic Bloch spaces to weighted-type spaces on the unit ball, {\it J. Math. Anal. Appl.} {\bf 423}(2015), 76–93.

%\bibitem{Liu-Yu-CAOT-2017} {\sc Y. M. Liu} and {\sc Y. Y. Yu}, On an extension of Stevi$\acute{c}$-Sharma operator from the general spaces to weighted-type spaces on the unit ball, {\it Complex Anal. Oper. Theory}, {\bf 11}(2)(2017), 261–288.

\bibitem{Lo-Loh-JMAA-2023} {\sc C. Lo} and {\sc A. W.-k Loh}, Complex symmetric generalized weighted composition operators on Hilbert spaces of analytic functions, \textit{J. Math. Anal. Appl.} \textbf{523}(2023) 127141.

%\bibitem{Lotto-McCarthy-BLMS-1993} {\sc B. A. Lotto} and {\sc J. E. McCarthy}, Composition preserves rigidity, {\it Bull. London Math. Soc.} {\bf 25} (6) (1993), 573–576.

%\bibitem{MacCluer-HJM-1987} {\sc B. D. MacCluer}, Composition operators on $ S^p $, {\it Hauston J. Math.} {\bf 13}(1987), 245-254.

%\bibitem{MacCluer-Weir-ASM-2004} {\sc B. D. MacCluer} and {\sc R. Weir}, Essentially normal composition operators on Bergman spaces, {\it Acta Sci. Math.} (Szeged) {\bf 70} (3–4) (2004), 799–817.

%\bibitem{MacCluer-SNY-2009} {\sc B. D. MacCluer}, Elementary Functional Analysis, {\it Springer, New York}, 2009.

%\bibitem{Makhmutov-Tjani-BAMS-2000} {\sc S. Makhmutov} and {\sc M. Tjani}, Composition operators on some M$\ddot{o}$bius invariant Banach spaces, {\it Bull. Aust. Math. Soc.} {\bf 62} (2000), 1–19.

%\bibitem{Manhas-Prajitura-Zhao-IEOT-2019} {\sc J. S. Manhas}, {\sc G. T. Prajitura}, and {\sc R. Zhao}, Weighted composition operators that preserve frames, {\it Integr. Equ. Oper. Theory}, {\bf 34} (2019).

%\bibitem{Manhas-Zhao-JMAA-2012} {\sc J. S. Manhas} and {\sc R. Zhao}, New estimates of essential norms of weighted composition operators between Bloch type spaces, {\it J. Math. Anal. Appl.} {\bf 389} (2012), 32–47.

%\bibitem{Martin-Vukotic-JFA-2006} {\sc M. J. Mart$\acute{i}$n} and {\sc D. Vukoti$\acute{c}$}, Adjoints of composition operators on Hilbert spaces of analytic functions, {\it J. Funct. Anal.} {\bf 238} (2006), 298-312.

%\bibitem{Martin-vukotic-PAMS-2006} {\sc M. J. Mart$\acute{i}$n} and {\sc D. Vukoti$\acute{c}$}, Isometries of the Dirichlet space among the composition operators, {\it Proc. Amer. Math. Soc.} {\bf 134} (6) (2006), 1701–1705.

%\bibitem{Martin-Vukotic-PAMS-2006} {\sc M. J. Marta$\acute{i}$n} and {\sc D. Vukoti$\acute{c}$}, Isometries of the Dirichlet space among the composition operators, {\it Proc. Amer. Math. Soc.} {\bf 134} (6) (2006), 1701–1705.

%\bibitem{Matache-1999} {\sc V. Matache}, Numerical ranges of composition operators, preprint, 1999.

%\bibitem{Matuzewska-Orlicz-BAPS-1960} {\sc W. Matuszewska} and {\sc W. Orlicz}, On certain properties of $\phi$-functions, {\it Bull. Acad. Pol. Sci.} {\bf 8} (1960), 439–443.

%\bibitem{Matuzewska-Orlicz-SM-1965} {\sc W. Matuszewska} and {\sc W. Orlicz}, On some classes of functions with regard to their orders of growth, {\it Stud. Math.} {\bf 26} (1965), 11–24.

%\bibitem{McDonald-PAMS-2003} {\sc J. N. McDonald}, Adjoints of a class of composition operators, {\it Proc. Amer. Math. Soc.} {\bf 131} (2003), 601-606.

%\bibitem{Mengestie-CAOT-2014} {\sc T. Mengestie}, Carleson type measures for Fock–Sobolev spaces, {\it Complex Anal. Oper. Theory}, {\bf 8}(6) (2014), 1225–1256.

%\bibitem{Mengestie-JGA-2014} {\sc T. Mengestie}, Product of Volterra type integral and composition operators on weighted Fock spaces, {\it J. Geom. Anal.} {\bf 24} (2014), 740–755.


%\bibitem{Mengestie-JMAA-2018} {\sc T. Mengestie}, A note on the differential operator on generalized Fock spaces, {\it J. Math. Anal. Appl.} {\bf 458}(2) (2018), 937-948.

%\bibitem{Mengestie-MJM-2021} {\sc T. Mengestie}, Convex-cyclic weighted composition operators and their adjoints, {\it  Mediterr. J. Math.} {\bf 18}, 172 (2021). https://doi.org/10.1007/s00009-021-01812-7

%\bibitem{Mengestie-Seyoum-CAOPT_2020} {\sc T. Mengestie} and {\sc W. Seyoum}, Topological and dynamical properties of composition operators, {\it Complex Anal. Oper. Theory}, {\bf 14}, 2 (2020).

%\bibitem{Mengestie-Seyoum-BMMSS} {\sc T. Mengestie} and {\sc W. Seyoum}, Spectrums and uniform mean ergodicity of weighted composition operators on Fock spaces, {\it Bull. Malays. Math. Sci. Soc.} (2021). https://doi.org/10.1007/s40840-021-01203-x


%\bibitem{Mengestie-Seyoum-QM-2021} {\sc T. Mengestie} and {\sc W. Seyoum}, Spectral properties of composition operators on Fock-type spaces, {\it Quaest. Math.} {\bf 44} (2021), 335–350.

%\bibitem{Mengestie-Seyoum-CMFT-2021} {\sc T. Mengestie} and {\sc W. Seyoum}, Supercyclicity and resolvent condition for weighted composition operators, {\it Comput. Methods Funct. Theory}, {\bf 22} (2022), 157-168. https://doi.org/10.1007/s40315-021-00380-x


%\bibitem{Mengestie-Ueki-CAOT_2019} {\sc T. Mengestie} and {\sc S. Ueki}, Integral, differential and multiplication operators on weighted Fock spaces, {\it Complex Anal. Oper. Theory}, {\bf 13}(3) (2019), 935–958.

%\bibitem{Mengestie-Worku-MJM-2018} {\sc T. Mengestie} and {\sc M. Worku}, Topological structures of generalized Volterra-type integral operators, {\it Mediterr. J. Math.} {\bf 15}, 42 (2018). https://doi.org/10.1007/s00009-018-1080-5

%\bibitem{Nataji-Abdollahpour-Osgooei-Saem-BKMS-2013} {\sc A. Najati}, {\sc M. R. Abdollahpour}, {\sc E. Osgooei}, and {\sc M. M. Saem}, More on sums of Hilbert space frames, {\it Bull. Korean Math. Soc.} {\bf 50} (2013), 1841–1846.

%\bibitem{Neumann-MA-1929} {\sc J. Neumann}, Zur Algebra der Funktionaloperationen und der Theorie der normalen Operatoren, {\it Math. Ann.} {\bf 102} (1929), 307–427.

\bibitem{Noor-Severiano-PAMS-2020} {\sc S. Noor} and {\sc O. Severiano}, Complex symmetry and cyclicity of composition operators on $ H^2(\mathbb{C}_{+}) $, \textit{Proc. Amer. Math. Soc.} 148(2020), 2469-2476.


%\bibitem{Nordgren-CJM-1968} {\sc E. A. Nordgren}, Composition operators, {\it Canad. J. Math} {\bf 20} (1968), 442-449.

\bibitem{Ohno-BAMS-2006} {\sc S. Ohno}, Products of composition and differentiation between Hardy spaces, \textit{Bull. Austral. Math. Soc.} \textbf{73}(2006), 235-243.

%\bibitem{Ohno-BKMS-2009} {\sc S. Ohno}, Products of composition and differentiation on Bloch spaces, {\it Bull. Korean Math. Soc.} {\bf 46}(6)(2009), 1135–1140.

%\bibitem{Oleinik-JMS-1978} {\sc V. L. Oleinik}, Embedding theorems for weighted classes of harmonic and analytic functions, {\it J. Math. Sci.} {\bf 9}(2) (1978), 228–243.

%\bibitem{Pau-Pelaez-JFA-2010} {\sc J. Pau} and {\sc J. A. Pelaez}, Embedding theorems and integration operators on Bergman spaces with rapidly decreasing weights, {\it J. Funct. Anal.} {\bf 259} (2010), 2727–2756.

%\bibitem{Pommerenke-CMH-1977} {\sc Ch. Pommerenke}, Schlichte Funktionen und analytische Funktionen von beschr$\ddot{a}$nkter mittlerer Oszillation, {\it Comment. Math. Helv.} {\bf 52} (1977), 591–602.

%\bibitem{Pons-ASM-2011} {\sc M. Pons}, Composition operators on Besov and Dirichlet type spaces of the ball, {\it Acta Sci. Math. (Szeged)} {\bf 77} (2011), 525-550.

%\bibitem{Rudin-MHNY-1987} {\sc W. Rudin}, Real and Complex Analysis, {\it 3rd ed., McGraw-Hill, New York}, 1987.

%\bibitem{Ryff-DMJ-1966} {\sc J. V. Ryff}, Subordinate $ H^p $ functions, {\it Duke Math. J.} {\bf 33} (1966), 347-354.

%\bibitem{Sarason-AM-1990} {\sc D. Sarason}, Composition operators as an integral operators, in: Lecture Notes Pure Appl. Math., {\bf 122} (1990), Dekker, 545-565.


%\bibitem{Schwartz-UT-1969} {\sc H. J. Schwartz}, Composition Operators on $ H^p $, {\it thesis, Univ. of Toledo}, 1969.

%\bibitem{Shapiro-PAMS-1987} {\sc J. H. Shapiro}, Compact composition operators on spaces of boundary-regular holomorphic functions, {\it Proc. Amer. Math. Soc.} {\bf 100} (1987), 49-57.

\bibitem{Shapiro-SVNY-1993} {\sc J. H. Shapiro}, Composition Operators and Classical Function Theory, {\it Springer-Verlag, New York}, (1993).

%\bibitem{Shapiro-MM-2000} {\sc J. H. Shapiro}, What do composition operators know about inner functions?, {\it Monatsh. Math.} {\bf 130} (2000), 57-70.

%\bibitem{Shields-MS-1974} {\sc A. L. Shields}, Weighted Shift Operators and Analytic Function Theory, Topics in Operator Theory, {\it Math. Surveys}, {\bf 13} (1974), 49-128.

%\bibitem{Smith-MMJ-1998} {\sc W. Smith}, Inner functions in the hyperbolic little Bloch class, {\it Michigan Math. J.} {\bf 45} (1998), 103–114.

%\bibitem{Stevic-Sharma-Bhat-AMC-2011} {\sc S. Stev$ \acute{i} $c, A. Sharma}, and {\sc A. Bhat}, Essential norm of products of multiplication composition and differentiation operators on weighted Bergman spaces, \textit{Appl. Math. Comput.} \textbf{218}(2011), 2386-2397.

%\bibitem{Smith-PAMS-1999} {\sc W. Smith}, Compactness of composition operators on BMOA, {\it Proc. Amer. Math. Soc.} {\bf 127} (1999), 2715–2725.

%\bibitem{Stephenson-TAMS-1980} {\sc K. Stephenson}, Weak subordination and stable classes of meromorphic functions, {\it Trans. Amer. Math. Soc.} {\bf 262} (1980), 565–577.

%\bibitem{Stout-PAMS} {\sc Q. F. Stout}, The numerical range of a weighted shift, {\it Proc. Amer. Math. Soc.} {\bf 88} (1983), 495–502.

%\bibitem{Tjani-1996} {\sc M. Tjani}, Compact composition operators on some M$\ddot{o}$bius invariant Banach spaces, PhD thesis, Michigan State University, (1996).

%\bibitem{Toeplitz-MZ-1918} {\sc O. Toeplitz}, Das algebraische Analogon zu einem Satz von Fej\'er, {\it Math. Z.} {\bf 2} (1918).

%\bibitem{Tutschke-Vasudeva-CRCP-2005} {\sc W. Tutschke} and {\sc H. Vasudeva}, An introduction to complex analysis: Classical and Modern Approaches, {\it CRC Press, Boca Raton}, 2005.

%\bibitem{Ueki-PAMS-2007} {\sc S. Ueki}, Weighted composition operators on the Fock space, {\it Proc. Amer. Math. Soc.} {\bf 135}(5) (2007), 1405–1410.

%\bibitem{Wahl-200} {\ R. G. Wahl}, On the hyponormality of some composition operators with multivalent symbol, (2000), preprint.

\bibitem{Wang-Yao-IJM-2016} {\sc M. F. Wang} and {\sc X. X. Yao}, Complex symmetry of weighted composition operators in several variables, {\it Inter. J. Math.} {\bf 27} (2016) 1650017.

%\bibitem{Wirths-Xiao-JMAA-2002} {\sc K. -J. Wirths} and {\sc J. Xiao}, Global integral criteria for composition operators, {\it J. Math. Anal. Appl.} {\bf 269} (2002), 702–715.

%\bibitem{Wojtaszczyk-AM-1982} {\sc P. Wojtaszczyk}, The Franklin system is an unconditional basis in $ H^1 $, {\it Ark. Mat.} {\bf 20} (1982), 293–300.


%\bibitem{Wu-PAMS-1994} {\sc P. Y. Wu}, Sums and products of cyclic operators, {\it Proc. Amer. Math. Soc.} {\bf 122} (1994), 1053–1063.

%\bibitem{Wulan-SCS-2007} {\sc H. Wulan}, Compactness of composition operators on BMOA and VMOA, {\it Sci. China Ser.} {\bf A 50} (2007), 997–1004.

%\bibitem{Wulan-Zheng-Zhu-PAMS-2009} {\sc H. Wulan}, {\sc D. Zheng}, and {\sc K. Zhu}, Composition operators on BMOA and the Bloch space, {\it Proc. Amer. Math. Soc.} {\bf 137} (2009), 3861–3868.

\bibitem{Yao-JMAA-2017} {\sc X. Yao}, Complex symmetric composition operators on a Hilbert space of Dirichlet series, \textit{J. Math.
	Anal. Appl.} \textbf{452}(2017), 1413-1419.

%\bibitem{Zhao-PAMS-2010} {\sc R. Zhao}, Essential norms of composition operators between Bloch type spaces, {\it Proc. Amer. Math. Soc.} {\bf 138} (2010), 2537–2546.

%\bibitem{Zhau-Pang-JMRA-2015} {\sc L. Zhao} and {\sc C. Pang}, A class of weighted composition operators on the Fock space, {\it J. Math. Res. Appl.} {\bf 35}(3) (2015), 303–310.

%\bibitem{Zhou-Yuan-SC-2009} {\sc Z. Zhou} and {\sc C. Yuan}, Linear fractional composition operators on the Dirichlet space in the unit ball, {\it Sci. China} {\bf 52A} (2009), 1661-1670.

%\bibitem{Zhu-2007} {\sc K. Zhu}, Operator Theory on Function Spaces, Second Edition, Math. Surveys and Monographs, {\bf 138}, American Mathematical Society: Providence, Rhode Island (2007)

%\bibitem{Zhu-SNW-2012} {\sc K. Zhu}, Analysis on Fock Spaces, {\it Springer, New York} (2012).

\end{thebibliography}
\end{document}